\newcolumntype{C}{>{\centering\arraybackslash}X}
\newcolumntype{D}{>{\centering\arraybackslash}X}
\newtheorem{theorem}{Theorem}
\newtheorem{lemma}[theorem]{Lemma}
\newtheorem{corollary}[theorem]{Corollary}
\newtheorem{conjecture}[theorem]{Conjecture}
\newtheorem*{claim*}{Claim}
\theoremstyle{remark}
\newcommand{\cB}{\ensuremath{\mathcal{B}}}
\newcommand{\cD}{\ensuremath{\mathcal{D}}}
\newcommand{\cG}{\ensuremath{\mathcal{G}}}
\newcommand{\cJ}{\ensuremath{\mathcal{J}}}
\newcommand{\cP}{\ensuremath{\mathcal{P}}}
\newcommand{\cS}{\ensuremath{\mathcal{S}}}
\newcommand{\N}{\ensuremath{\mathbb{N}}}
\newcommand{\cN}{\ensuremath{\mathcal{N}}}
\newcommand{\R}{\ensuremath{\mathbb{R}}}
\newcommand{\cV}{\ensuremath{\mathcal{V}}}
\newcommand{\cX}{\ensuremath{\mathcal{X}}}
\newcommand{\cY}{\ensuremath{\mathcal{Y}}}
\newcommand{\cZ}{\ensuremath{\mathcal{Z}}}
\newcommand{\PP}{\ensuremath{\mathcal{P}}}
\newcommand{\QQ}{\ensuremath{\mathcal{Q}}}
\newcommand{\CC}{\ensuremath{\mathcal{C}}}
\newcommand{\zero}{\ensuremath{\texttt{0}}}
\newcommand{\one}{\ensuremath{\texttt{1}}}
\newcommand{\cLa}{\mathrel{\reflectbox{\rotatebox[origin=c]{180}{$\cV$}}}}
\newcommand{\subsetneql}{\ensuremath{\subset}}
\begin{document}

\title{Poset Ramsey number $R(P,Q_n)$. III. Chain Compositions and Antichains}
\author{Christian Winter\footnote{Karlsruhe Institute of Technology,  Karlsruhe, Germany E-mail: \textit{christian.winter@kit.edu}}}

\maketitle

\begin{abstract}
An induced subposet $(P_2,\le_2)$ of a poset $(P_1,\le_1)$ is a subset of $P_1$ such that for every two $X,Y\in P_2$, $X\le_2 Y$ if and only if $X\le_1 Y$.
The Boolean lattice $Q_n$ of dimension $n$ is the poset consisting of all subsets of $\{1,\dots,n\}$ ordered by inclusion.

Given two posets $P_1$ and $P_2$ the poset Ramsey number $R(P_1,P_2)$ is the smallest integer $N$ such that in any blue/red coloring of the elements of $Q_N$ there is either a monochromatically blue induced subposet isomorphic to $P_1$ or a monochromatically red induced subposet isomorphic to $P_2$.

We provide upper bounds on $R(P,Q_n)$ for two classes of $P$: parallel compositions of chains, i.e.\ posets consisting of disjoint chains which are pairwise element-wise incomparable, 
as well as subdivided $Q_2$, which are posets obtained from two parallel chains by adding a common minimal and a common maximal element. 
This completes the determination of $R(P,Q_n)$ for posets $P$ with at most $4$ elements. 
If $P$ is an antichain $A_t$ on $t$ elements, we show that $R(A_t,Q_n)=n+3$ for $3\le t\le \log \log n$.
Additionally, we briefly survey proof techniques in the poset Ramsey setting $P$ versus $Q_n$.
%Combined with previously known results we obtain for every poset $P$ on at most $4$ elements bounds on $R(P,Q_n)$ which are tight in a strong sense.
\end{abstract}

\section{Introduction}
\subsection{Basic setting and background}
A \textit{partially ordered set}, or \textit{poset} for short, is a pair $(P,\le_P)$ of a set $P$ and a partial order $\le_P$ on this set, i.e.\ a binary relation that is transitive, reflexive and anti-symmetric. Usually we refer to a poset $(P,\le_P)$ just as $P$. The elements of $P$ are often called \textit{vertices}. 
If two vertices $A$ and $B$ are \textit{incomparable}, i.e.\ if  $A\not\le B$ and $A\not\ge B$, we write $A\parallel B$.
%If $A\le B$ and $A\neq B$, we write $A<B$.
A poset $(P_2,\le_{P_2})$ is an \textit{(induced) subposet} of a poset $(P_1,\le_{P_1})$ if $P_2\subseteq P_1$ and for every two $X,Y\in P_2$, $X\le_{P_2} Y$ if and only if $X\le_{P_1} Y$.
If such a $P_2$ is isomorphic to some poset $P'$, we say that $P_2$ is a \textit{copy} of $P'$ in $P_1$.
Equivalently, $P_2$ is a copy of $P'$ in $P_1$ if $P_2$ is the image of an \textit{embedding} $\phi\colon P'\to P_1$, i.e.\ an injective function such that for every two $X,Y\in P'$, $X \leq_{P'}  Y$ if and only if $\phi(X)\leq_{P_1} \phi(Y)$.

The \textit{Boolean lattice} $Q_n$ is the poset whose vertices are the subsets of an $n$-element \textit{ground set} ordered by inclusion.
In this paper we consider colorings of the vertices of posets. A \textit{blue/red coloring} of a poset $P$, is a mapping $c\colon P\to \{\text{blue},\text{red}\}$. 
We say that a poset is \textit{monochromatic} if all its vertices have the same color. If all vertices are blue, we say that the poset is \textit{blue}. Similarly, if all vertices are red, the poset is \textit{red}.
\\

Ramsey-type problems are widely studied for graphs and hypergraphs and were extended to posets by Ne\v{s}et\v{r}il and R\"odl \cite{NR} in a general form. 
A special case of their studies asks to find for a fixed poset $P$, a hosting poset $W$ such that every blue/red-coloring of the elements of $W$ contains a monochromatic copy of $P$. 
Kierstead and Trotter \cite{KT} considered this setting with the goal of minimizing $p(W)$ for all posets $P$ with fixed $p(P)$, where $p$ is a poset parameter such as size, height or width.
Axenovich and Walzer \cite{AW} introduced a closely related Ramsey setting which recently attracted the attention of various researchers. % in recent years.
For fixed posets $P_1$ and $P_2$ the \textit{poset Ramsey number} of $P_1$ versus $P_2$ is %given by
\begin{multline*}
R(P_1,P_2)=\min\{N\in\N \colon \text{ every blue/red coloring of $Q_N$ contains either}\\ 
\text{a blue copy of $P_1$ or a red copy of $P_2$}\}.
\end{multline*}
%This number can be interpreted in the regime of Kierstead and Trotter \cite{KT} by using the $2$-dimension as poset parameter.
For the diagonal setting $P_1=P_2=Q_n$, the bounds on $R(Q_n,Q_n)$ were gradually improved to $2n+1\le R(Q_n,Q_n)\le n^2-n+2$, see chronologically Axenovich and Walzer~\cite{AW}, Cox and Stolee \cite{CS}, Lu and Thompson \cite{LT}, and Bohman and Peng \cite{BP}. The asymptotic behaviour in terms of $n$ remains an open problem. 
Note that for any $P_1$ and $P_2$ the poset Ramsey number is well-defined:
It is easy to see that every two posets $P_1$ and $P_2$ are induced subposets of $Q_n$ for large $n$, thus an upper bound on $R(Q_n,Q_n)$ implies the existence of $R(P_1,P_2)$.
For further results on diagonal poset Ramsey numbers $R(P,P)$ see e.g.\ Chen, Chen, Cheng, Li, and Liu \cite{CCCLL} and Walzer \cite{Walzer}. 

Another actively investigated setting of poset Ramsey numbers considers $R(Q_m,Q_n)$ for $m$ fixed and $n$ large. It is trivial to see that $R(Q_1,Q_n)=n+1$.
In the case $m=2$ it was shown that $R(Q_2,Q_n)=n+\Theta\big(\frac{n}{\log(n)}\big)$ with upper bound due to Gr\'osz, Methuku, and Tompkins \cite{GMT} 
and lower bound due to Axenovich and the author \cite{QnV}. 
For $m\ge 3$ only rough estimates are known, see Lu and Thompson \cite{LT}.
This open field of research as well as Erd\H{o}s-Hajnal-type questions on posets motivated a detailed study of the off-diagonal poset Ramsey number $R(P,Q_n)$ 
for fixed $P$ and large $n$ which is presented in a series of papers \cite{QnK}, \cite{QnN} including the present paper.
%\\

Other extremal problems on posets include rainbow Ramsey problems, see Chang, Gerbner, Li, Methuku, Nagy, Patk\'os, and Vizer \cite{CGLMNPV};
%Chang, Gerbner, Li, Methuku, Nagy, Patk\'os, and Vizer \cite{CGLMNPV};
 Turan-type, most notable see Methuku and P\'alv\"olgyi \cite{MP}; 
 and saturation-type questions, which are discussed in a recent survey by Keszegh, Lemons, Martin, P\'alv\"olgyi, and Patk\'os \cite{KLMPP}. 

\subsection{Summary of results}\label{sec:old_results}

In order to understand the general framework of bounds on $R(P,Q_n)$ we consider two special posets.
The $V$-shaped poset $\cV$ has three distinct vertices $A, B,$ and $C$ with $C\le A$, $C\le B$, and $A\parallel B$. 
Its symmetric counterpart is the poset $\cLa$ which has three distinct vertices $A, B,$ and $C$ where $C\ge A$, $C\ge B$, and $A\parallel B$.
We say that a poset $P$ is \textit{non-trivial} if $P$ contains a copy of either $\cV$ or $\cLa$, otherwise we say that $P$ is \textit{trivial}.
It was shown by Axenovich and the author \cite{QnV} that two different asymptotic behaviours of $R(P,Q_n)$ emerge depending on whether the fixed $P$ is trivial or not.
Combined with a general lower bound of Walzer \cite{Walzer} and a general upper bound by Axenovich and Walzer \cite{AW} the following is known: %we obtain:
\begin{theorem}[\cite{AW}\cite{QnV}\cite{Walzer}]\label{thm-MAIN}
Let $P$ be a trivial poset. Then for every $n$, $$n+h(P)-1\le R(P,Q_n) \le  n +h(P)+ \alpha(w(P))+1.$$
Let $P$ be a non-trivial poset. Then for sufficiently large $n$, $$n+\tfrac{1}{15}\tfrac{n}{\log n} \le R(P,Q_n) \le h(P)n + \dim_2(P).$$
\end{theorem}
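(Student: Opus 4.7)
The plan is to assemble four known bounds --- two from \cite{QnV}, one from \cite{Walzer}, and one from \cite{AW} --- and match them to the two regimes of~$P$; since the theorem is a compilation, the task reduces to verifying that the hypotheses align rather than producing any new argument.

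For the trivial case I would combine the general Walzer lower bound $n+h(P)-1\le R(P,Q_n)$ from \cite{Walzer}, proved by exhibiting a layer-based blue/red coloring of $Q_{n+h(P)-2}$ whose blue part has height at most $h(P)-1$ (and hence admits no copy of a height-$h(P)$ poset) and whose red part avoids a $Q_n$, with the upper bound $R(P,Q_n)\le n+h(P)+2\log(w(P))+2$ of \cite{QnV}. The structural observation underlying the latter is that forbidding $\cV$ and $\cLa$ forces $P$ to be a disjoint union of at most $w(P)$ chains of total height $h(P)$, so that $P$ embeds into the Boolean lattice with only a $2\log(w(P))+2$ additive overhead; the proof of \cite{QnV} then exploits this structure via a chain-counting argument inside a red-rich slab.

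For the non-trivial case I would pair the general Axenovich--Walzer upper bound $R(P,Q_n)\le h(P)\cdot n + \dim_2(P)$ from \cite{AW}, obtained by embedding $P$ into $Q_{\dim_2(P)}$ and iterating a chain-extension argument across $h(P)$ consecutive dimension-$n$ slabs, with the lower bound $R(\cV,Q_n)\ge n+\tfrac{1}{15}\tfrac{n}{\log n}$ from \cite{QnV}, proved via a random blue/red coloring. To transfer the latter to arbitrary non-trivial $P$, I would use three routine facts: any non-trivial $P$ contains $\cV$ or $\cLa$ as an induced subposet; $R(\cV,Q_n)=R(\cLa,Q_n)$ by complementation duality in the host $Q_N$; and $R(\cdot,Q_n)$ is monotone under induced subposet containment, since a blue copy of a larger poset always contains a blue copy of a smaller one.

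The only point needing care --- and the single place where the ``proof'' is more than bookkeeping --- is checking that the hypothesis ``trivial'' used in the \cite{QnV} upper bound coincides with the definition adopted here (no induced $\cV$ and no induced $\cLa$), and that the $\cV$ lower bound of \cite{QnV} is valid uniformly in $n$ large enough. Both checks are short, so once the four ingredients are cited the theorem follows by direct substitution.
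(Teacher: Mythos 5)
Your proposal is correct and matches the paper's treatment: Theorem \ref{thm-MAIN} is stated there as a compilation of cited results (the layered-coloring lower bound of \cite{Walzer}, the trivial-case upper bound and the $n+\frac{1}{15}\frac{n}{\log n}$ lower bound of \cite{QnV} transferred to all non-trivial $P$ via containment of $\cV$ or its dual plus monotonicity, and the $h(P)\cdot n+\dim_2(P)$ upper bound of \cite{AW}), with no independent proof given in this paper. Your assembly and the hypothesis checks are exactly the intended bookkeeping; the only nitpick is the phrase ``chains of total height $h(P)$,'' which should read that a trivial poset is a parallel composition of at most $w(P)$ chains, the longest of which has $h(P)$ elements.
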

Throughout this paper `$\log$' refers to the logarithm base $2$. 
The height $h(P)$ is the size of the largest set of pairwise comparable vertices in $P$, 
while the width $w(P)$ denotes the size of the largest set of pairwise incomparable vertices in $P$.
The \textit{$2$-dimension} $\dim_2(P)$ is the smallest dimension $N$ of a Boolean lattice $Q_N$ which contains a copy of $P$. It is a basic observation that this poset parameter is well-defined.
%The poset parameters \textit{height} $h(P)$, \textit{width} $w(P)$ and \textit{$2$-dimension} $\dim_2(P)$ are formally introduced in Section \ref{sec:notation}.
The \textit{Sperner number} $\alpha(t)$ is the minimal integer $N$ such that $\binom{N}{\lfloor N/2\rfloor}\ge t$.
It was determined almost exactly in an explicit form by Habib, Nourine, Raynaud and Thierry \cite{HNRT} who showed that
\begin{equation}
\alpha(t)\in\big\{\big\lfloor\log t+\tfrac{\log\log t}{2}\big\rfloor+1,\big\lfloor\log t+\tfrac{\log\log t}{2}\big\rfloor+2\big\}.\tag{$\ast$}
\end{equation}
We remark that posets $P$ of height $h(P)=1$, i.e.\ \textit{antichains}, are trivial, however for such $P$ the general upper bound $R(P,Q_n) \le h(P)n + \dim_2(P)$ by Axenovich and Walzer \cite{AW} is stronger than the bound $R(P,Q_n) \le  n +h(P)+ \alpha(w(P))+1$ given in Theorem \ref{thm-MAIN}.
\\

%We remark that for trivial $P$, \cite{QnV} actually provides a slightly better upper bound than mentioned here.
Another important contribution of Walzer \cite{Walzer} in the analysis of $R(P,Q_n)$ is providing a bound for \textit{parallel compositions} $P$. 
Given a poset $\QQ$, two subposets $P_1,P_2\subseteq \QQ$ are \textit{parallel} if they are element-wise incomparable.
We denote by $P_1+P_2$ the \textit{parallel composition} of two posets $P_1$ and $P_2$, that is the poset consisting of a copy of $P_1$ and a copy of $P_2$ which are disjoint and parallel. 
In the literature the parallel composition is also referred to as \textit{independent union}. Note that this operation is commutative and associative.

\begin{theorem}[\cite{Walzer}]\label{thm_union}  %Prop 12 of his thesis
Let $\ell\ge 2$ and let $P_1,P_2,\dots,P_\ell$, and $Q$ be arbitrary posets.
Let $\PP=P_1+P_2+\dots+P_\ell$ be the parallel composition of $P_1,\dots,P_\ell$. Then
$$R(\PP,Q)\le  \max_{j\in[\ell]} \big\{R(P_j,Q) \big\} +\alpha(\ell) \le \max_{j\in[\ell]} \big\{R(P_j,Q) \big\} +\log (\ell)+\tfrac12 \log\log(\ell)+2.$$
\end{theorem}

Collecting known results and providing several new bounds in this paper, 
we obtain bounds on $R(P,Q_n)$ which are asymptotically tight in the two leading additive terms for all posets $P$ on at most $4$ vertices (of which there are 19 up to symmetry).
Moreover, we exactly determine $R(P,Q_n)$ for all trivial $P$ on at most $4$ vertices. An overview of the bounds is given in Table \ref{table}.
In every row of the table a poset $P$ is defined by its Hasse diagram and labelled using the notation of this paper. Formal definitions of all posets are stated in Sections \ref{sec:new} and \ref{sec:notation}. 
Some posets have alternative names used in the literature, these are additionally mentioned in the table.

\renewcommand{\arraystretch}{1.1}
\def\figscale{0.41}
\def\rowhgt{20pt}
\def\rowhgtb{19pt}

\begin{table}\label{table}
\begin{center}
\begin{tabular}{| l | r  l | l | l |}
    \hline \rule{0pt}{13pt}
    &\multicolumn{2}{c|}{\textbf{poset $P$}} & $R(P,Q_n)$ & \textbf{proof} \\ \hline\hline \rule{0pt}{\rowhgt}
    \includegraphics[scale=\figscale]{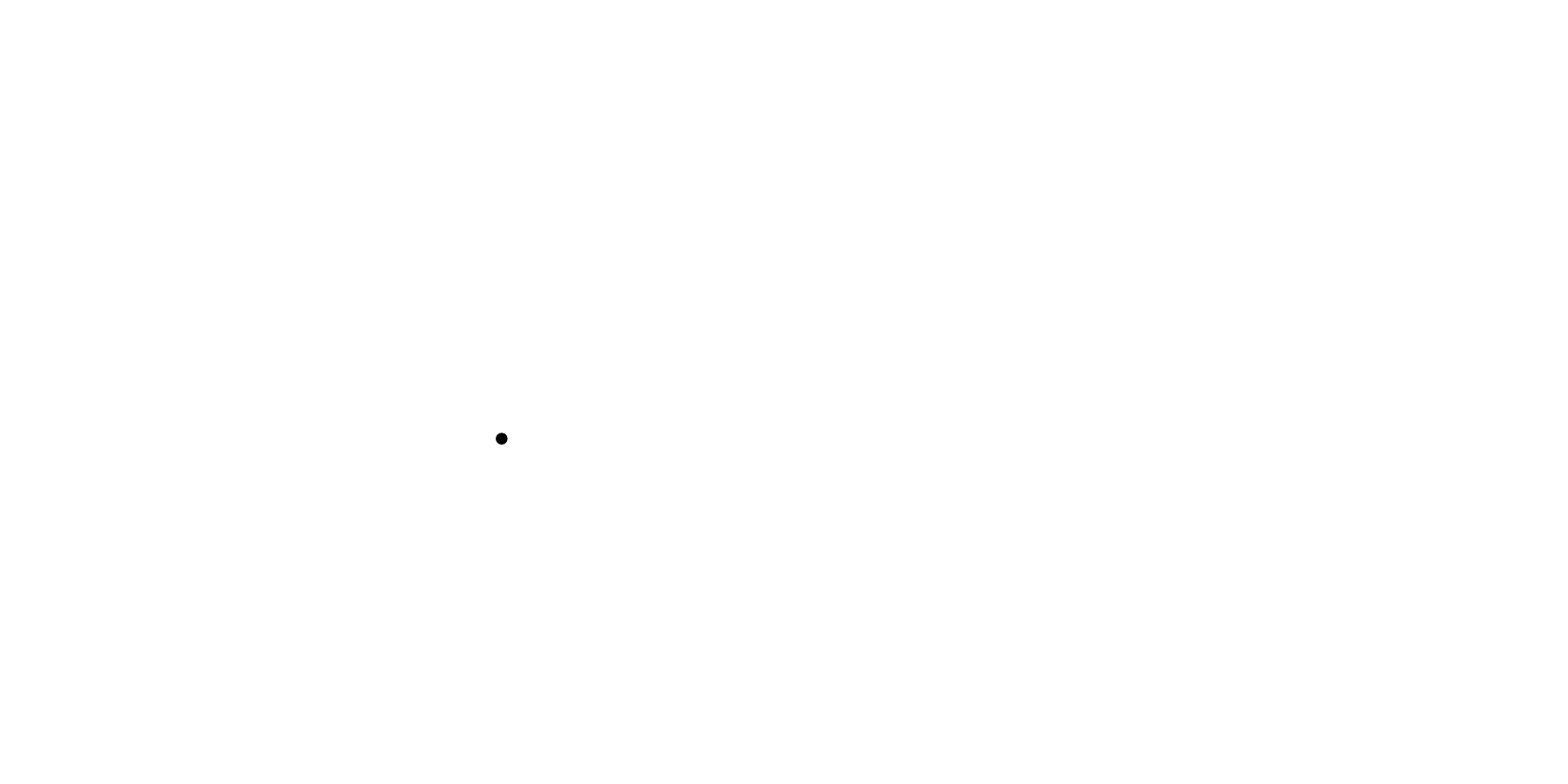} & 
    $C_1$& $=Q_0$ & $n+0$ & trivial\\ \hline \hline \rule{0pt}{\rowhgt}
    
    \includegraphics[scale=\figscale]{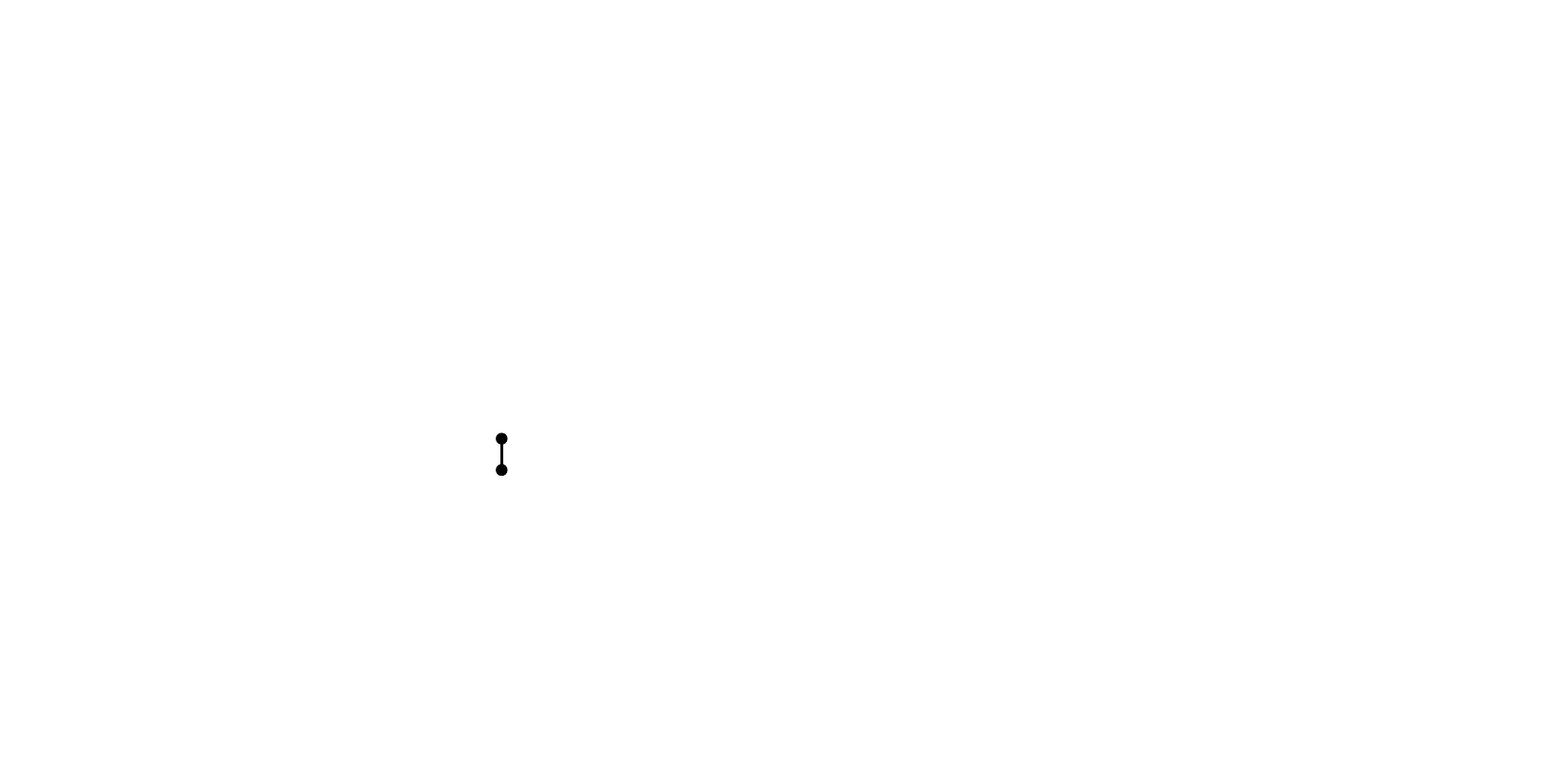} & 
    $C_2$& $=Q_1$ & $n+1$ & Thm.~\ref{thm_cP_UB} \\ \hline \rule{0pt}{\rowhgt}
    
    \includegraphics[scale=\figscale]{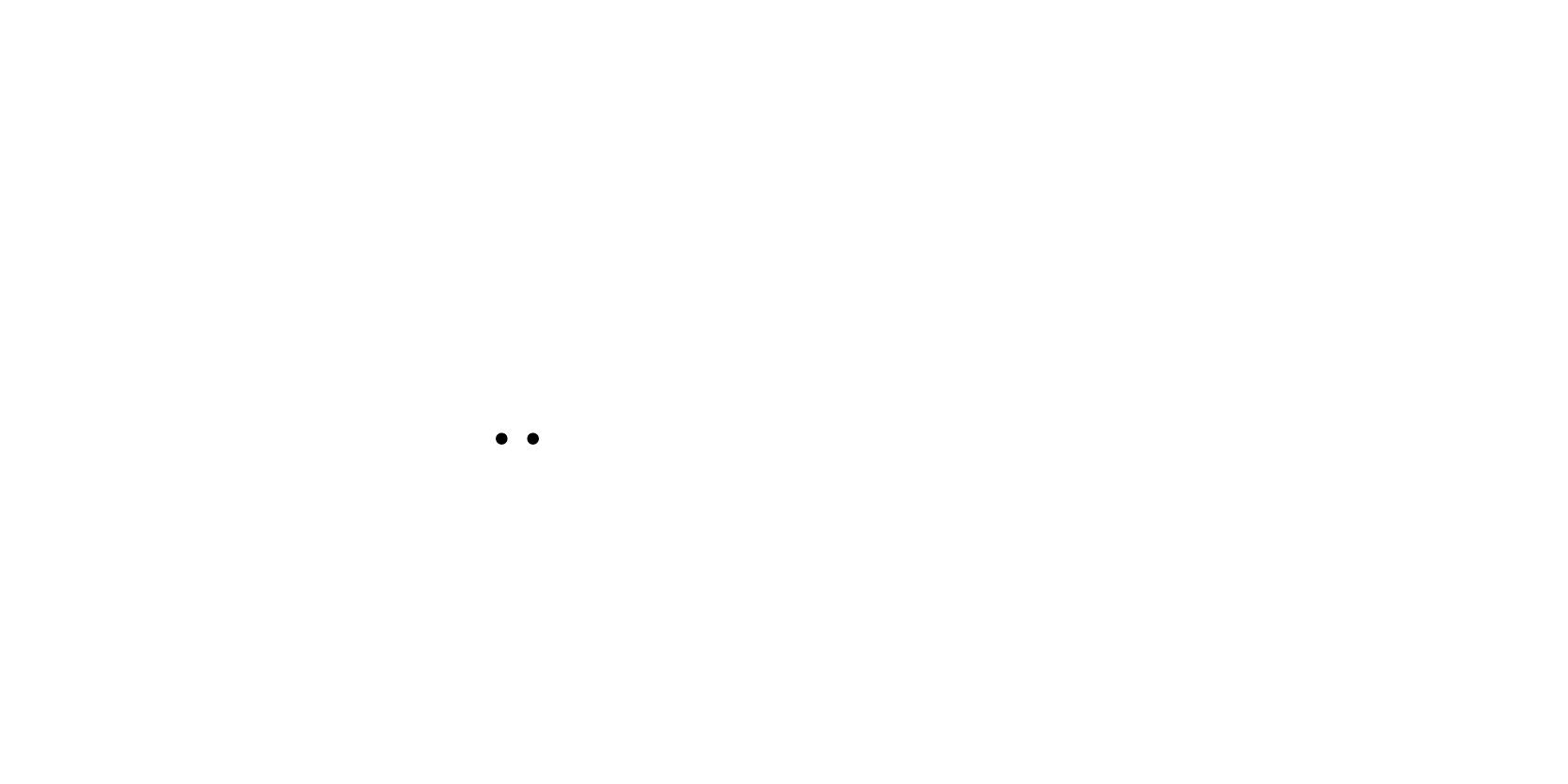} & 
    $A_2$&$=2C_1$ & $n+2$ &Thm.~\ref{thm_cP_UB} \\ \hline\hline \rule{0pt}{\rowhgt}
    
    \includegraphics[scale=\figscale]{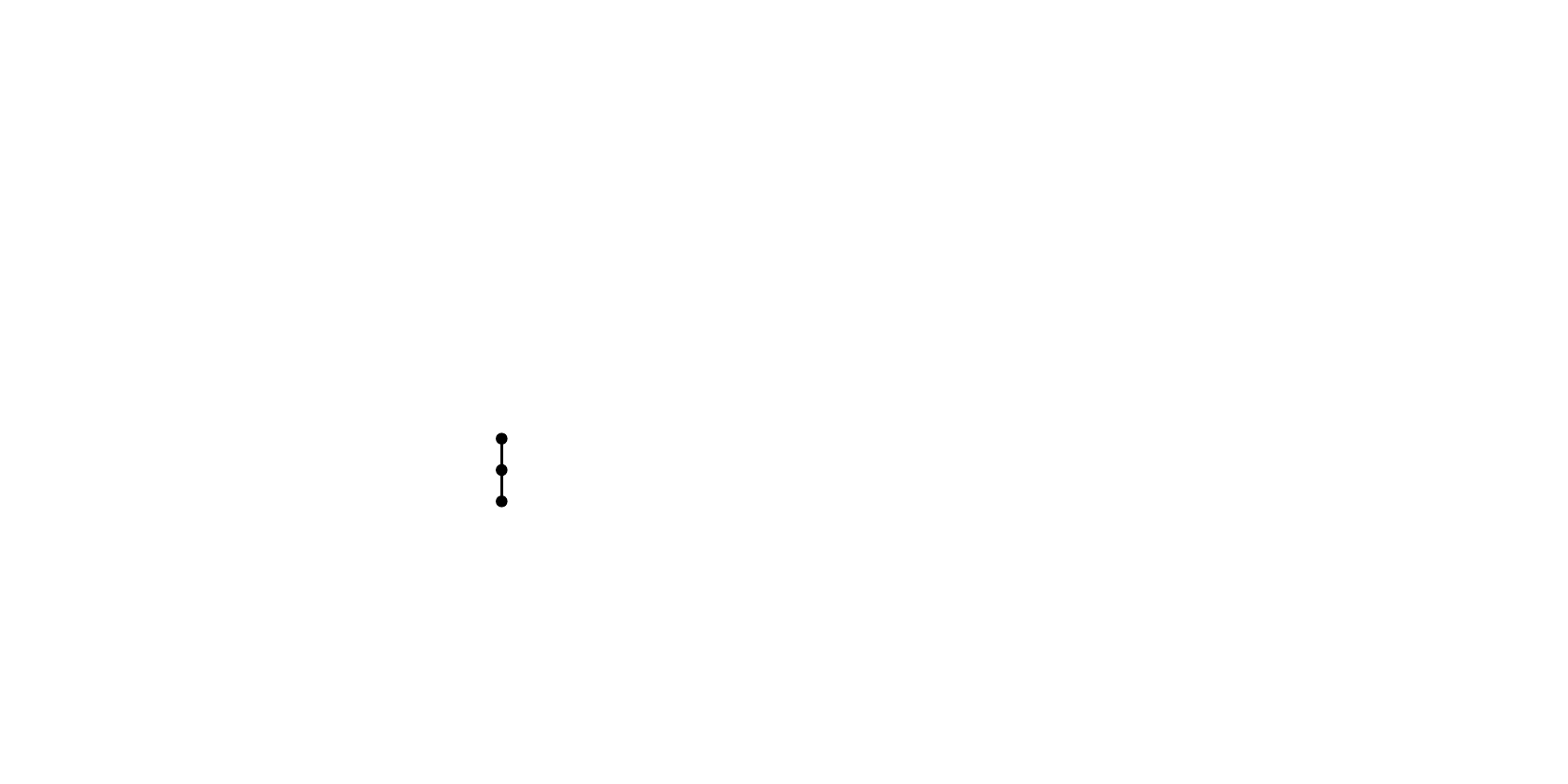} & 
    $C_3$& & $n+2$ & Thm.~\ref{thm_cP_UB}  \\ \hline \rule{0pt}{\rowhgt}
    
    \includegraphics[scale=\figscale]{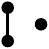} & 
    $\CC_{2,1}$ & & $n+3$ & Thm.~\ref{thm_cP_UB}  \\ \hline \rule{0pt}{\rowhgt}
    
    \includegraphics[scale=\figscale]{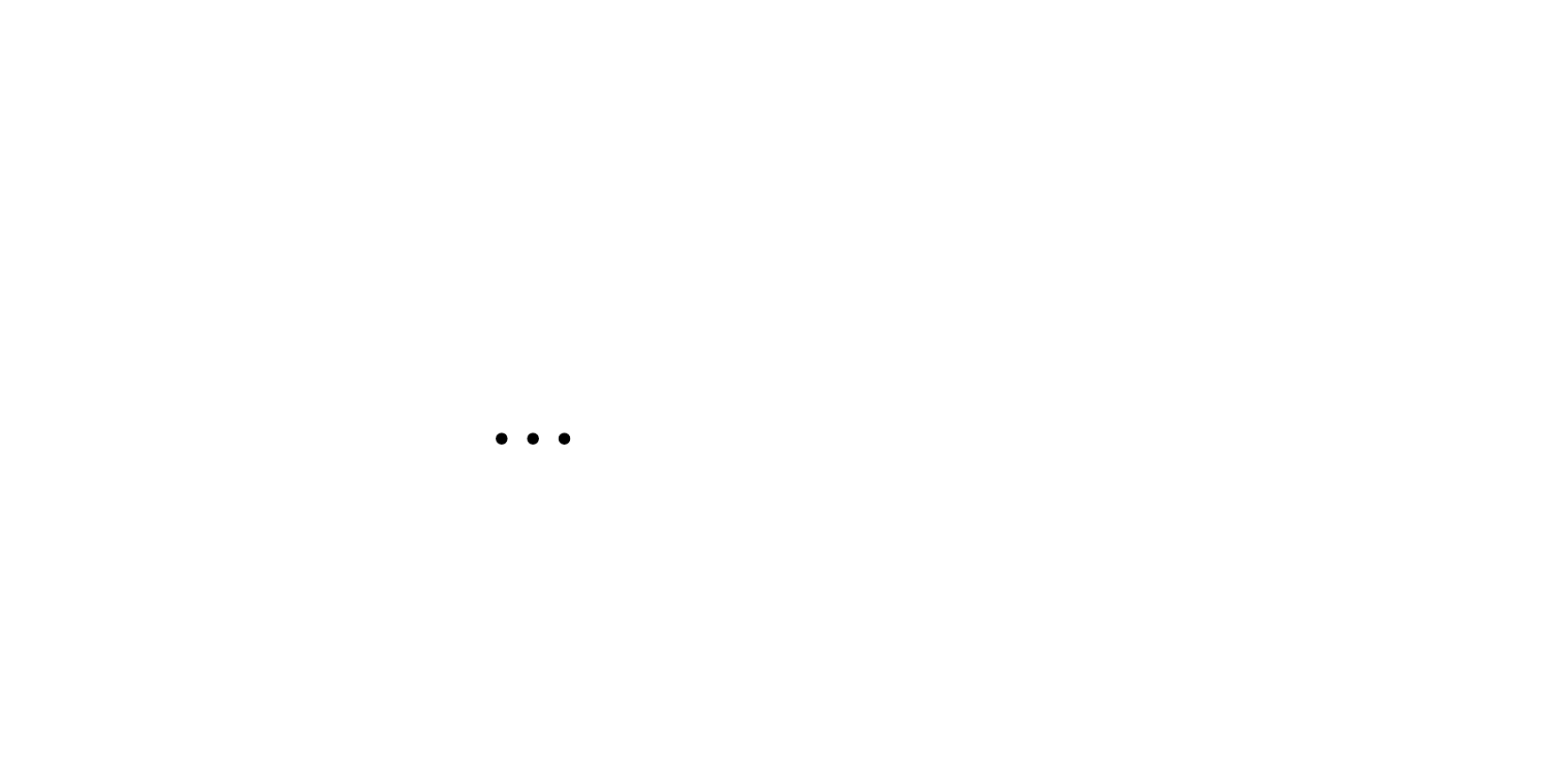} & 
    $A_3$&$=3C_1$ & $n+3$ & Thm.~\ref{thm:antichain} \\ \hline \rule{0pt}{\rowhgtb}
    
    \includegraphics[scale=\figscale]{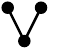} & 
    $\cV$&$=K_{1,2}$	 &$n+\frac{c(n)n}{\log(n)}$, \ $\frac{1}{15}\le c(n)\le 1+o(1)$ & \cite{QnV}\\ \hline\hline \rule{0pt}{\rowhgt}
    
    \includegraphics[scale=\figscale]{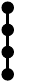} & 
    $C_4$& & $n+3$ & Thm.~\ref{thm_cP_UB} \\ \hline \rule{0pt}{\rowhgt}
    
    \includegraphics[scale=\figscale]{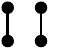} & 
    $\CC_{2,2}$&$=2C_2$ & $n+3$ & Thm.~\ref{thm_cP_UB} \\ \hline    \rule{0pt}{\rowhgt}
    
    \includegraphics[scale=\figscale]{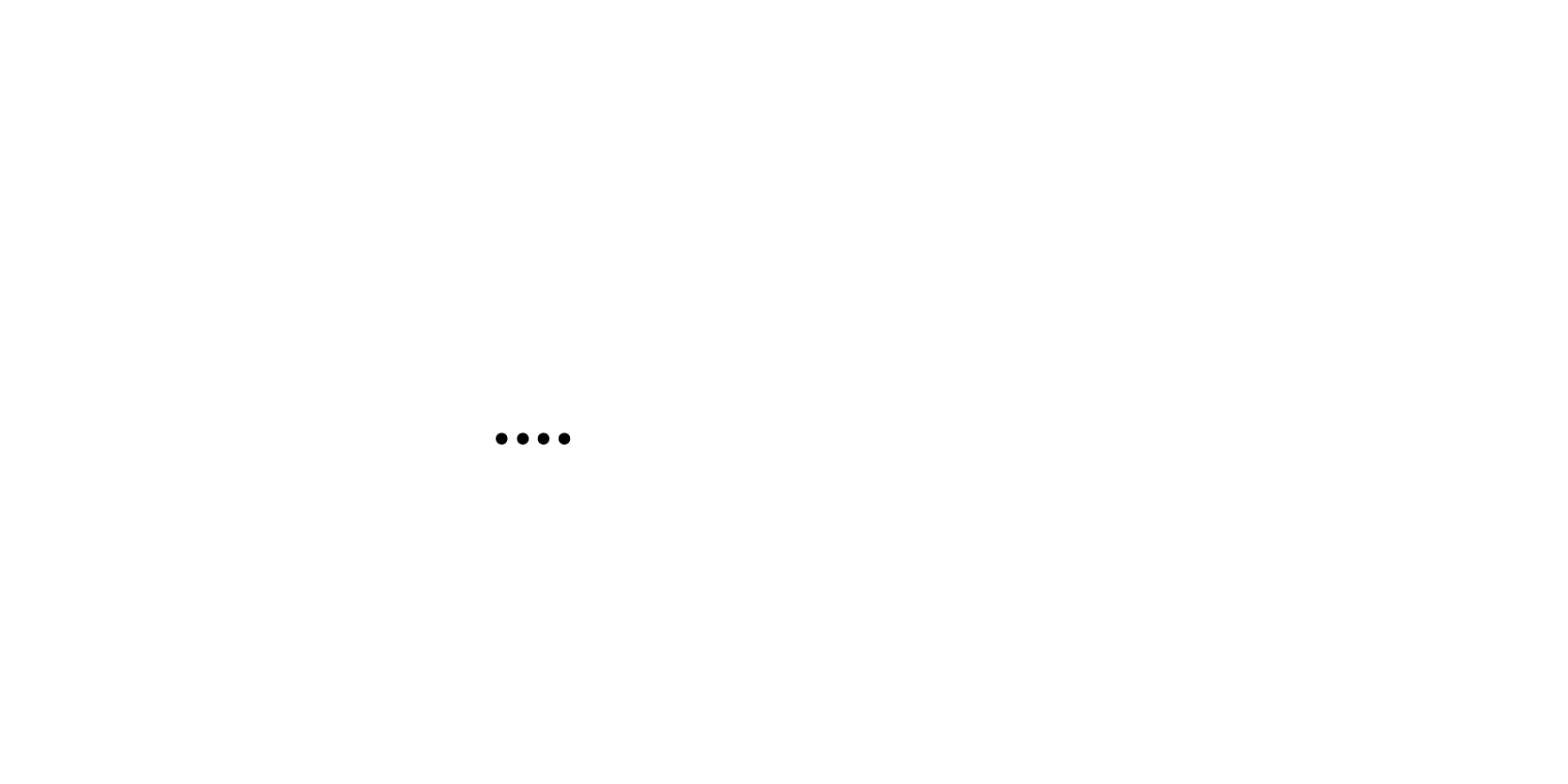} & 
    $A_4$&$=4C_1$ & $n+3$ & Thm.~\ref{thm:antichain} \\ \hline    \rule{0pt}{\rowhgt}
    
    \includegraphics[scale=\figscale]{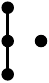} & 
    $\CC_{3,1}$ & & $n+4$ & Thm.~\ref{thm_cP_UB}  \\ \hline \rule{0pt}{\rowhgt}
    
    \includegraphics[scale=\figscale]{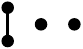} & 
    $\CC_{2,1,1}$ & & $n+4$ & Thm.~\ref{lem_CA} \\ \hline \rule{0pt}{\rowhgtb}
    
    \includegraphics[scale=\figscale]{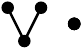} & 
    $\cV+C_1$& & $n+\frac{c(n)n}{\log(n)}$, \ $\frac{1}{15}\le c(n)\le 1+o(1)$ &
    \begin{tabular}[c]{@{}l@{}}LB: Thm.~\ref{thm-MAIN} (\cite{QnV}),\\UB: \cite{QnV}, Thm.~\ref{thm_union} (\cite{Walzer})\end{tabular}\\ \hline \rule{0pt}{\rowhgtb}
      
    \includegraphics[scale=\figscale]{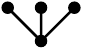} & 
    $K_{1,3}$&$=V_3$ & $n+\frac{c(n)n}{\log(n)}$, \ $\frac{1}{15}\le c(n)\le 1+o(1)$ &
     \begin{tabular}[c]{@{}l@{}}LB: Thm.~\ref{thm-MAIN} (\cite{QnV}),\\UB: \cite{QnK}\end{tabular}\\ \hline \rule{0pt}{\rowhgtb}
    
    \includegraphics[scale=\figscale]{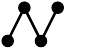} & 
    $\cN$& & $n+\frac{c(n)n}{\log(n)}$, \ $\frac{1}{15}\le c(n)\le 1+o(1)$ &
     \begin{tabular}[c]{@{}l@{}}LB: Thm.~\ref{thm-MAIN} (\cite{QnV}),\\UB: \cite{QnN}\end{tabular}\\ \hline \rule{0pt}{\rowhgtb}
    
    \begin{tabular}[c]{@{}l@{}}\includegraphics[scale=\figscale]{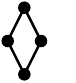}\end{tabular} & 
    $Q_2$&$=K_{1,2,1}$ & $n+\frac{c(n)n}{\log(n)}$, \ $\frac{1}{15}\le c(n)\le 2+o(1)$ & 
    \begin{tabular}[c]{@{}l@{}}LB: Thm.~\ref{thm-MAIN} (\cite{QnV}),\\UB: \cite{GMT}\end{tabular}\\ \hline \rule{0pt}{\rowhgtb}
    
    %LB: Thm.~\ref{thm-MAIN} (\cite{QnV}), UB: \cite{GMT}\\ \hline \rule{0pt}{\rowhgtb}
    
    \begin{tabular}[c]{@{}l@{}}\includegraphics[scale=\figscale]{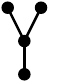}\end{tabular}& 
    $K_{1,1,2}$&$=Y$ & $n+\frac{c(n)n}{\log(n)}$, \ $\frac{1}{15}\le c(n)\le 2+o(1)$ &
     \begin{tabular}[c]{@{}l@{}}LB: Thm.~\ref{thm-MAIN} (\cite{QnV}),\\UB: \cite{QnK}\end{tabular}\\ \hline \rule{0pt}{\rowhgtb}
    
    \begin{tabular}[c]{@{}l@{}}\includegraphics[scale=\figscale]{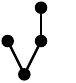}\end{tabular}& 
    $\cJ$&$=Q_2^-$ & $n+\frac{c(n)n}{\log(n)}$, \ $\frac{1}{15}\le c(n)\le 2+o(1)$ &
     \begin{tabular}[c]{@{}l@{}}LB: Thm.~\ref{thm-MAIN} (\cite{QnV}),\\UB: Cor.~\ref{cor:J}\end{tabular}\\ \hline \rule{0pt}{\rowhgtb}

    \includegraphics[scale=\figscale]{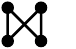} & 
    $K_{2,2}$&$=\Bowtie$ & $n+\frac{c(n)n}{\log(n)}$, \ $\frac{1}{15}\le c(n)\le 4+o(1)$ &
     \begin{tabular}[c]{@{}l@{}}LB: Thm.~\ref{thm-MAIN} (\cite{QnV}),\\UB: \cite{QnK}\end{tabular}\\ \hline 
  \end{tabular}
  \caption{Off-diagonal poset Ramsey bounds for small $P$ and reference to the proofs of lower bound (LB) and upper bound (UB)}
\label{table}
\end{center} 
\end{table}

\subsection{New results}\label{sec:new}

%In the following we state the results contributed by this paper.
A \textit{chain} $C_t$ of length $t$ is a poset on $t$ vertices forming a linear order. 
For $s,t\in\N$, let $\cS\cD_{s,t}$ denote the \textit{$(s,t)$-subdivided diamond}, the poset obtained from two disjoint and element-wise incomparable chains of length $s$ and $t$, respectively, by adding a common minimal vertex and a common maximal vertex, i.e.\ a vertex which is smaller than all others and a vertex which is larger than all other, see Figure \ref{fig:SD}. Note that $\cS\cD_{1,1}=Q_2$. 
Our first result shows that for subdivided diamonds the lower bound obtained from Theorem \ref{thm-MAIN} is asymptotically tight in a strong sense.

\begin{figure}[h]
\centering
\includegraphics[scale=0.5]{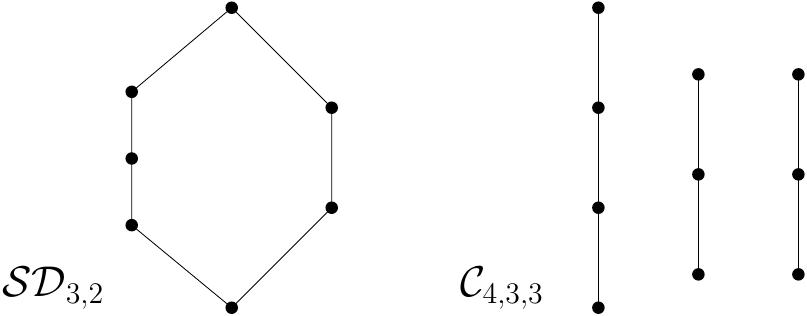}
\caption{Hasse diagrams of $\cS\cD_{3,2}$ and $\CC_{4,3,3}$}
\label{fig:SD}
\end{figure}

\begin{theorem}\label{thm:subdiv_diamond}\label{thm:SD}
Let $s$ and $t$ be fixed natural numbers. Then for sufficiently large $n$,
$$R(\cS\cD_{s,t},Q_n)\le n + \frac{\big(2+o(1)\big)n}{\log n}.$$
\end{theorem}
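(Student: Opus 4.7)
The plan is to extend the Gr\'osz--Methuku--Tompkins (GMT) argument of~\cite{GMT}, which gives the case $s=t=1$ via $R(Q_2,Q_n)=R(\cS\cD_{1,1},Q_n)\le n+(2+o(1))n/\log n$, to arbitrary fixed $s,t$. Let $N=n+k$ with $k=(2+o(1))n/\log n$, fix a 2-coloring of $Q_N$ containing no red copy of $Q_n$, and aim to produce a blue copy of $\cS\cD_{s,t}$. I first reduce to a cleaner combinatorial task: $\cS\cD_{s,t}$ embeds as an induced subposet of every Boolean interval $[A,E]\subseteq Q_N$ with $|E\setminus A|=s+t$. Indeed, for any ordering $e_1,\ldots,e_{s+t}$ of $E\setminus A$, the vertices $A$, $B_i:=A\cup\{e_1,\ldots,e_i\}$ for $1\le i\le s$, $D_j:=A\cup\{e_{s+1},\ldots,e_{s+j}\}$ for $1\le j\le t$, and $E$ form an induced copy, since $B_i\setminus A$ and $D_j\setminus A$ are disjoint nonempty sets and hence $B_i\parallel D_j$ for all $i,j$. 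It therefore suffices to locate a pair $A<E$ with $|E\setminus A|=s+t$ together with an ordering of $E\setminus A$ making every listed vertex blue.

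The main step is to strengthen the GMT conclusion to yield, under the no-red-$Q_n$ hypothesis, not just a blue $Q_2$ but a blue pair $A<E$ with $|E\setminus A|=s+t$ and blue density at least $1-\tfrac{1}{2(s+t)}$ in each of the $s+t-1$ intermediate layers of $[A,E]$. Because $s+t$ is a constant, this stronger conclusion should cost only constants in $k$, which are absorbed into the $o(1)$ term. With such an interval in hand, I would pick an ordering of $E\setminus A$ uniformly at random; by symmetry of uniform permutations, $B_i$ is uniformly distributed in layer $i$ of $[A,E]$ and so red with probability at most $\tfrac{1}{2(s+t)}$, and likewise for each $D_j$. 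Linearity of expectation then bounds the expected number of red staircase elements by $\tfrac{1}{2}<1$, so some ordering realises an all-blue staircase and yields a blue copy of $\cS\cD_{s,t}$.

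The principal difficulty lies in the quantitative strengthening of GMT. Their chain-counting argument naturally produces a single blue $Q_2$; extracting instead a blue endpoint-pair with a prescribed larger gap $|E\setminus A|=s+t$ and high blue density across every intermediate layer will require either rerunning their argument with the layer-wise information preserved, or applying GMT inside carefully chosen sub-lattices and stitching the outputs together. As a sanity check, the general bound of Theorem~\ref{thm-MAIN} only delivers $(\max(s,t)+2)n+\dim_2(\cS\cD_{s,t})$, which is far weaker; the improvement comes precisely from exploiting the absence of red $Q_n$ in the GMT-style quantitative manner rather than through the generic height bound.
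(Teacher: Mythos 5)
Your reduction of $\cS\cD_{s,t}$ to a ``staircase'' inside a Boolean interval $[A,E]$ with $|E\setminus A|=s+t$ is correct as an embedding statement, and the random-ordering step at the end would work if your intermediate lemma were available. But that lemma --- the ``quantitative strengthening of GMT'' giving a blue pair $A\subset E$ with $|E\setminus A|=s+t$ and blue density at least $1-\tfrac{1}{2(s+t)}$ in every intermediate layer of $[A,E]$ --- is only announced (you yourself flag it as the principal difficulty), and in fact it is \emph{false}, so no amount of ``constant cost in $k$'' can deliver it. Consider the layered coloring of $Q_{n+k}$ in which $Z$ is blue exactly when $|Z|$ is even. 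Since $k=o(n)$, there are fewer than $n+1$ odd layers, so there is no red copy of $Q_n$ (a copy needs red vertices of $n+1$ distinct sizes). Yet for any $A\subset E$ with $|E\setminus A|=s+t\ge 2$, the intermediate layer of $[A,E]$ at height $|A|+1$ lies entirely inside a single layer of $Q_{n+k}$, and if $A$ is blue that layer is odd, hence completely red; so \emph{no} interval of height $s+t$ has positive blue density in all intermediate layers, let alone density $1-\tfrac{1}{2(s+t)}$. The same coloring shows that a blue copy of $\cS\cD_{s,t}$ cannot in general be found with its common minimum and maximum at distance exactly $s+t$; its extreme vertices must be allowed to sit far below and far above the two parallel chains. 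This is the structural point your plan misses, and it is why the approach as stated cannot be repaired by bookkeeping inside short intervals.

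The paper's proof finds exactly this ``spread-out'' copy. It applies the Chain Lemma (Lemma~\ref{lem:chain}) to all $k!$ linear orderings of the extra ground set $\cY$, obtaining a blue $\cY$-chain for each, and pigeonholes on the pair (bottom, top) to get $m=k!/2^{2(n+k)}$ orderings whose chains share a common minimum $Z_0$ and maximum $Z_k$. If some ordering $\tau$ is not ``$t$-close'' to a fixed reference ordering $\sigma$, then the segments $Z^\sigma_i,\dots,Z^\sigma_{i+t}$ and $Z^\tau_i,\dots,Z^\tau_{i+t}$ are pairwise incomparable, and together with $Z_0$ and $Z_k$ (which are $k$ levels apart, not $s+t$) they form a blue $\cS\cD_{t,t}\supseteq\cS\cD_{s,t}$. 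Otherwise every ordering is $t$-close, the corresponding permutations are $(2t+2)$-proper, and Lemma~\ref{lem:count_perm} bounds their number by $2^{ck}$, contradicting $m>2^{ck}$ from Lemma~\ref{lem:count_est}. If you want to salvage your outline, the missing ingredient is some mechanism, like this many-chains-plus-counting argument, that produces two long blue chains with common endpoints and a stretch of $t+1$ consecutive pairwise-incomparable levels; density statements about short intervals cannot substitute for it.
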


\noindent Note that the poset $\cJ$, as defined in the penultimate line of Table \ref{table}, is an induced subposet of $\cS\cD_{1,2}$. 
Thus, Theorem \ref{thm:SD} implies:

\begin{corollary}\label{cor:J}
For $n$ sufficiently large, $R(\cJ,Q_n)\le R(\cS\cD_{1,2})\le n+ \big(2+o(1)\big)\frac{n}{\log n}$.
\end{corollary}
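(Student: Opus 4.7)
The plan is to obtain the corollary as an immediate consequence of Theorem \ref{thm:SD}, with the only genuine content being the verification that $\cJ$ is an induced subposet of $\cS\cD_{1,2}$. I would begin by fixing explicit labels for the five vertices of $\cS\cD_{1,2}$: a common minimum $\bot$, a common maximum $\top$, a single vertex $a$ forming the chain of length $1$, and two vertices $b_1 < b_2$ forming the chain of length $2$, with $a$ incomparable to both $b_1$ and $b_2$. Deleting $\top$ leaves four vertices with the relations $\bot < a$, $\bot < b_1 < b_2$, $a \parallel b_1$, and $a \parallel b_2$; this is precisely the Hasse diagram of $\cJ$ as depicted in the penultimate line of Table \ref{table}.

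Next, I would appeal to the elementary monotonicity principle for poset Ramsey numbers: whenever $P'$ is an induced subposet of $P$, one has $R(P', Q_n) \le R(P, Q_n)$, because any blue induced copy of $P$ in a host $Q_N$ restricts to a blue induced copy of $P'$ on the chosen vertex subset, and likewise for red. Applying this with $P = \cS\cD_{1,2}$ and $P' = \cJ$ reduces the corollary to the bound $R(\cS\cD_{1,2}, Q_n) \le n + (2+o(1))n/\log n$, which is exactly Theorem \ref{thm:SD} specialized to $s = 1$ and $t = 2$.

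The main obstacle lies entirely in Theorem \ref{thm:SD}; once that is available, no further work is required beyond the subposet verification described above, and the chain of inequalities in the statement of the corollary follows in one line.
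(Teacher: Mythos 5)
Your argument is correct and matches the paper's own derivation: the paper likewise notes that $\cJ$ is an induced subposet of $\cS\cD_{1,2}$ (your explicit embedding, deleting the top of $\cS\cD_{1,2}$, verifies this against the definition $B\le C\le D$, $A\ge B$, $A\parallel C$, $A\parallel D$) and then invokes the subposet monotonicity $R(P',Q_n)\le R(P,Q_n)$ together with Theorem~\ref{thm:SD} for $s=1$, $t=2$. No gaps; nothing further is needed.
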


\noindent We remark that Theorem \ref{thm:SD} can be generalized, but the argument is rather technical, so we omit it here. 
Generalizing subdivided diamonds we can show that for every poset $P$ with width $w(P)=2$ and which contains no copy of the N-shaped $4$-element poset $\cN$,
$$R(P,Q_n)= n + O\left(\frac{n}{\log n}\right).$$
This bound follows from Theorem \ref{thm:SD} of the present article, Theorem 1 and Corollary 6 of \cite{QnK} as well as a characterization by Valdes \cite{Valdes} stating that a poset is $\cN$-free if and only if it is series-parallel.
\\

%Given a poset $\QQ$, two subposets $P_1,P_2\subseteq \QQ$ are \textit{parallel} if they are element-wise incomparable.
%We denote by $P_1+P_2$ the \textit{parallel composition} of two posets $P_1$ and $P_2$, that is the poset consisting of a copy of $P_1$ and a copy of $P_2$ which are disjoint and parallel. 
%In the literature the parallel composition is also referred to as \textit{independent union}. Note that this operation is commutative and associative.

It is a simple observation that a poset is trivial if and only if it is a parallel composition of chains $C_{t_1}, \dots, C_{t_\ell}$.
We say that this is the \textit{chain composition} with parameters $t_1,\dots,t_\ell$,
$$\CC_{t_1,t_2,\dots,t_\ell}=C_{t_1} + C_{t_2} + \dots + C_{t_\ell}.$$
Throughout the paper we use the convention that $t_1\ge t_2 \ge \dots \ge t_\ell$. %Note that $\CC_t=C_t$.
%Here we show the following exact bounds. % on the poset Ramsey number of a chain composition versus a Boolean lattice.
Theorem \ref{thm-MAIN} provides that $n+t_1-1\le R(\CC_{t_1,t_2,\dots,t_\ell},Q_n) \le n +t_1+ \alpha(\ell)+1.$ Here we show the following.

\begin{theorem}\label{thm_cP_UB}\label{thm:CC}
Let $n,t_1,t_2\in\N$ such that $t_1\ge t_2$. Then
$$R(C_{t_1},Q_n)=n+t_1-1\qquad \text{and}\qquad R(\CC_{t_1,t_2},Q_n)=n+t_1+1.$$
\end{theorem}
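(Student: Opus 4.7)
My plan for proving $R(C_{t_1}, Q_n) = n + t_1 - 1$ is to obtain the lower bound directly from Theorem~\ref{thm-MAIN} (since $C_{t_1}$ is trivial of height $t_1$) and argue the upper bound by induction on $t_1$. The base case $t_1 = 1$ is immediate, as any blue vertex of $Q_n$ is a blue $C_1$ and an all-red coloring exhibits the red $Q_n$. For the inductive step, I would consider $Q_N$ with $N = n + t_1 - 1$ and apply the inductive hypothesis in the form $R(C_{t_1-1}, Q_{n+1}) \leq N$ to $Q_N$ itself: either a red $Q_{n+1}$ appears (which contains a red $Q_n$) or a blue chain $x_1 < \cdots < x_{t_1-1}$ exists. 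The assumption of no blue $C_{t_1}$ forces the flanking intervals $[\emptyset, x_1)$, each $(x_i, x_{i+1})$, and $(x_{t_1-1}, [N]]$ to be entirely red, and I would extract a red $Q_n$ whenever one of these intervals is big enough -- dimension $\geq n+1$ for the boundary intervals or $\geq n+2$ for an internal gap. The residual case, in which the blue chain sits in the middle of $Q_N$ with all flanking red intervals simultaneously short, is the delicate one and will require either a smarter choice of the blue $C_{t_1-1}$ or an iterative refinement using the vertices incomparable to the chain.

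With $R(C_{t_1}, Q_n) \leq n + t_1 - 1$ in hand, the upper bound $R(\CC_{t_1, t_2}, Q_n) \leq n + t_1 + 1$ follows by a clean reduction. Set $N = n + t_1 + 1$ and partition $Q_N$ into four sub-lattices $P_{00}, P_{10}, P_{01}, P_{11}$ according to membership of the last two coordinates $N-1$ and $N$; each is isomorphic to $Q_{n + t_1 - 1}$. By the chain result, each $P_{ij}$ contains either a red $Q_n$ -- which immediately gives a red $Q_n$ in $Q_N$ -- or a blue $C_{t_1}$. In the remaining case all four quadrants carry a blue $C_{t_1}$, and the crucial observation is that every vertex of $P_{10}$ contains $N-1$ but not $N$, while every vertex of $P_{01}$ contains $N$ but not $N-1$, so the two quadrants are element-wise incomparable. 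Combining the blue $C_{t_1}$ in $P_{10}$ with a length-$t_2$ sub-chain of the blue $C_{t_1}$ in $P_{01}$ produces a blue induced copy of $\CC_{t_1, t_2}$.

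For the lower bound $R(\CC_{t_1, t_2}, Q_n) \geq n + t_1 + 1$, I would construct an explicit coloring of $Q_{n + t_1}$: declare a vertex $v$ blue if $|v| \leq t_1 - 1$ or $v = [n + t_1]$, and red otherwise. The red vertices then occupy exactly the $n$ levels of sizes $t_1, t_1 + 1, \ldots, n + t_1 - 1$, and since any induced $Q_n$ contains a chain of $n+1$ vertices spread across $n+1$ distinct levels, no red $Q_n$ can fit. On the blue side, any chain of $t_1$ blue vertices uses $t_1$ distinct sizes drawn from the only $t_1 + 1$ available blue sizes $\{0, 1, \ldots, t_1 - 1, n+t_1\}$, and must therefore include either the unique vertex of size $0$ (namely $\emptyset$) or the unique vertex of size $n+t_1$ (namely $[n+t_1]$). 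Since both $\emptyset$ and $[n+t_1]$ are comparable to every element of $Q_{n+t_1}$, no second blue chain can be element-wise incomparable to such a blue $C_{t_1}$, so the coloring contains no blue copy of $\CC_{t_1, t_2}$.

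The hardest step in this program is the upper bound for $R(C_{t_1}, Q_n)$, specifically closing the residual case where the blue $C_{t_1 - 1}$ returned by the inductive hypothesis lies deep in the middle of $Q_N$ with no single flanking red interval large enough to host an induced $Q_n$ on its own. Once that chain result is secured, the upper and lower bounds for $\CC_{t_1, t_2}$ follow through essentially self-contained quadrant-splitting and level-counting arguments.
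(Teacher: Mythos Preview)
Your plan for the upper bound $R(C_{t_1},Q_n)\le n+t_1-1$ has a genuine gap, and it is precisely the ``residual case'' you flag. The flanking-interval strategy cannot close it: already for $t_1=2$ and $N=n+1$ the inductive hypothesis hands you a single blue vertex $x_1$, and whenever $1\le |x_1|\le n$ neither $[\varnothing,x_1)$ nor $(x_1,[N]]$ is large enough to host an induced red $Q_n$. In general the gap sizes sum to $N=n+t_1-1$, so distributing them evenly keeps every gap well below your $n+1$ or $n+2$ threshold; nothing in the induction forces a single large gap, and the vertices incomparable to the chain do not form a Boolean sublattice, so the suggested ``iterative refinement'' has no obvious foothold. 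The paper does not attempt anything like this: it simply invokes Corollary~\ref{cor:chain} (the Chain Lemma), which states that any blue/red colored $Q_{n+k}$ contains a red $Q_n$ or a blue $C_{k+1}$. With $k=t_1-1$ this is exactly the desired upper bound in one line. The Chain Lemma is the essential input here, and your inductive scheme is effectively trying to reprove it without its main idea.

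Your treatment of $\CC_{t_1,t_2}$ is correct (modulo the gap above, which propagates since you use the chain bound as a black box) and takes a slightly different route from the paper. For the upper bound the paper first applies Corollary~\ref{cor:chain} with $k=t_1+1$ to obtain a blue chain of length $t_1+2$, trims its endpoints to a blue $C_{t_1}$, and then chooses $a\in Z_1$, $b\notin Z_{t_1}$ \emph{adaptively} to carve out a parallel $Q_{n+t_1-1}$ in which it looks for a blue $C_{t_2}$; your fixed quadrant split via the last two coordinates achieves the same effect non-adaptively and is arguably cleaner, at the cost of examining four sublattices where two would suffice. Your lower-bound coloring is a particular instance of the paper's construction (which allows the $t_1-1$ extra blue layers to be arbitrary rather than the bottom ones), and your exclusion argument---a blue $C_{t_1}$ must hit layer $0$ or the top layer, hence meets a vertex comparable to everything---is essentially the paper's.
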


\begin{theorem}\label{lem_CA}\label{thm:CCC}
Let $n,t,t'\in\N$ with $t-1\ge t'$. Then
$$R(\CC_{t,t-1,t'},Q_n)= n+t+2.$$
\end{theorem}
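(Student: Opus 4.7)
The plan is to exhibit an explicit 2-coloring of $Q_{N'}$ with $N' = n+t+1$ that avoids both a blue $\CC_{t,t-1,t'}$ and a red $Q_n$, establishing $R(\CC_{t,t-1,t'},Q_n) \ge n+t+2$. Color $A \subseteq [N']$ blue if $|A| \le t-1$ or $|A| \ge N'-1$, and red otherwise. The red region spans ranks $t, \ldots, N'-2 = n+t-1$, giving a longest chain of length $n$, too short for $Q_n$. For the blue part, $\emptyset$ and $[N']$ are comparable to every vertex, so no element of a chain in a parallel composition can be either; hence every chain in a putative $\CC_{t,t-1,t'}$ has element sizes in $\{1,\ldots,t-1,N'-1\}$. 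A blue chain of length $t$ then takes the form
$\{a_1\} \subsetneq \cdots \subsetneq \{a_1,\ldots,a_{t-1}\} \subsetneq [N']\setminus\{c_1\}$
with $c_1 \notin \{a_1,\ldots,a_{t-1}\}$. Incomparability constraints force the minimum of any second parallel chain (of length $t-1$) to contain $c_1$ and omit $a_1$; a brief case split on whether this chain stays entirely in the bottom layers or uses the top element $[N']\setminus\{a_1\}$ shows that any third parallel chain faces a contradiction between the constraints from $D_1$ (its bottom elements contain $c_1$) and those from $D_2$ (those elements must either miss $c_1$ or be contained in $[N']\setminus\{a_1\}$, hence comparable to an element of $D_2$).

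\textbf{Upper bound.} Fix a 2-coloring of $Q_N$ with $N = n+t+2$, and suppose no red $Q_n$ exists. I fix some $v \in [N]$ and consider the induced subcube $\cS = \{A \subseteq [N] : v \notin A\}$, isomorphic to $Q_{N-1}$. By Theorem~\ref{thm:CC}, $R(\CC_{t,t-1}, Q_n) = n+t+1 = N-1$, so $\cS$ contains either a blue $\CC_{t,t-1}$ or a red $Q_n$; the latter would also be a red $Q_n \subseteq Q_N$, contradicting our assumption, so a blue $\CC_{t,t-1} = D_1 \cup D_2$ exists inside $\cS$. Setting $A_i = \min D_i$, $B_i = \max D_i$, we obtain $v \notin A_i \cup B_i$ for $i = 1, 2$. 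The goal is now to produce a third blue chain $D_3$ of length $t'$ parallel to $D_1 \cup D_2$.

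Consider the induced subposet $\QQ = \{X \subseteq [N] : X \parallel A_1, B_1, A_2, B_2\}$. By chain monotonicity, every $X \in \QQ$ is incomparable to every element of $D_1 \cup D_2$, so it suffices to find a blue $C_{t'}$ in $\QQ$. Since $D_1 \parallel D_2$, we have $A_1 \not\subseteq B_2$ and $A_2 \not\subseteq B_1$; pick $p \in A_1 \setminus B_2$ and $q \in A_2 \setminus B_1$ (so $p, q \ne v$). Define $\phi \colon Q_{N-3} \hookrightarrow \QQ$ by $\phi(Y) = Y \cup \{v\}$ for $Y \subseteq [N] \setminus \{p, q, v\}$: the set $\phi(Y)$ omits both $p \in A_1 \subseteq B_1$ and $q \in A_2 \subseteq B_2$ (so $\phi(Y) \not\supseteq A_i, B_i$), and contains $v \notin B_1 \cup B_2$ (so $\phi(Y) \not\subseteq B_i$, and hence also $\not\subseteq A_i$). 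Thus $\phi$ is an order-preserving injection whose image is an induced copy of $Q_{N-3}$ inside $\QQ$.

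Since $N-3 = n+t-1 \ge n+t'-1$ (as $t \ge t'+1$), Theorem~\ref{thm:CC} (in its chain form $R(C_{t'}, Q_n) = n+t'-1$) applies to this induced $Q_{N-3}$: either a blue $C_{t'}$ exists inside, furnishing $D_3$ and completing a blue $\CC_{t,t-1,t'} = D_1 \cup D_2 \cup D_3$, or a red $Q_n$ exists, which is also a red $Q_n \subseteq Q_N$, contradicting our assumption. The main obstacle in this plan is the uniform construction of the embedding $\phi$: it requires a coordinate $v$ lying outside $B_1 \cup B_2$, which can fail when $B_1 \cup B_2 = [N]$. This is sidestepped by locating the blue $\CC_{t,t-1}$ inside the codimension-one subcube $\cS$ rather than in all of $Q_N$, which forces $v \notin B_1 \cup B_2$ and reserves $v$ to serve as the ``always in'' slot of $\phi$.
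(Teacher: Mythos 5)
Your proposal is correct, and it establishes both bounds with the same toolkit as the paper (Theorem~\ref{thm:CC} plus parallel-subcube arguments) but with a different order of decomposition in the upper bound. The paper first extracts a blue chain of length $t+2$ via Corollary~\ref{cor:chain}, trims it to a length-$t$ chain $C'$ that avoids $\varnothing$ and $[N]$, picks $a\in Z_1$, $b\notin Z_t$, and then finds the blue $\CC_{t-1,t'}$ (or a red $Q_n$) in the codimension-two lattice $\{Z: b\in Z,\ a\notin Z\}$ of dimension $n+t=R(\CC_{t-1,t'},Q_n)$; you instead find the blue $\CC_{t,t-1}$ first, inside the codimension-one subcube avoiding a reserved coordinate $v$ (of dimension exactly $n+t+1=R(\CC_{t,t-1},Q_n)$, so there is no slack --- which is precisely why reserving $v$ in advance is essential, as you note), and then place the last chain $C_{t'}$ in the codimension-three cube $\{Y\cup\{v\}: Y\subseteq [N]\setminus\{p,q,v\}\}$, whose elements are incomparable to both chains because they contain $v\notin B_1\cup B_2$ and miss $p\in A_1\setminus B_2$, $q\in A_2\setminus B_1$; this still fits since $n+t-1\ge n+t'-1=R(C_{t'},Q_n)$ by $t-1\ge t'$. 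The paper's order is slightly more economical (two sacrificed coordinates instead of three, and only the simple Corollary~\ref{cor:chain} is needed in the first step), while yours shows the statement follows almost formally from Theorem~\ref{thm:CC} applied twice; both yield $n+t+2$. For the lower bound, your coloring is the special instance of the paper's construction in which the $t-2$ extra blue layers are $2,\dots,t-1$, and your direct case analysis replaces the paper's projection into the sublattice $\{Z: b\in Z,\ a\notin Z\}$; it is sound, though one compressed step deserves spelling out: when the second chain avoids the top blue layer it must occupy all sizes $1,\dots,t-1$ (pigeonhole over the $t$ available blue sizes), so its minimum is exactly $\{c_1\}$, which is what makes every vertex of the third chain (all containing $c_1$) comparable to it.
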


\noindent Theorem \ref{thm_union} implies an improvement of the general lower bound for trivial posets.
%Theorem \ref{thm_cP_UB} implies improved general bounds for trivial $P$. 

\begin{corollary}\label{cor:LB}
Let $n\in \N$. Let $P$ be a trivial poset. If $w(P)=1$, then $R(P,Q_n)=n+h(P)-1$. If $w(P)\ge 2$, then 
$R(P,Q_n) \ge n+h(P)+1$.
\end{corollary}

\noindent Note that every poset with $w(P)=1$ is trivial. By Theorem \ref{thm-MAIN}, the bound for $w(P)\ge2$ also holds for non-trivial posets if $n$ is large. 
However, for small $n$, Corollary \ref{cor:LB} does not extend to non-trivial posets, for example it can be easily checked that $R(\cV,Q_1)=3<4$.
\\

An \textit{antichain} $A_t$ is the parallel composition $C_{1,1,\dots,1}$ of $t$ single vertices, i.e.\ the poset consisting of $t$ pairwise incomparable vertices. 
Since $h(A_t)=1$ and $\dim_2(A_t)=\alpha(t)$ by the famous Sperner's theorem \cite{S}, the bounds in \cite{AW} and $(\ast)$ imply that 
$$n\le R(P,Q_n)\le n+\alpha(t)\le n+\log t+\tfrac{\log\log t}{2}+2.$$
In this paper we exactly determine $R(A_t,Q_n)$, not only for fixed $t$ but also if $t$ grows at most double-logarithmic in terms of $n$.

\begin{theorem}\label{thm_antichain}\label{thm:antichain}
For every two integers $t$ and $n$ with $3\le t\le \log\log n$, $$R(A_t,Q_n)= n+3.$$
%Furthermore, if $t=2$, then $R(A_2,Q_n)= n+2$ for every $n\in\N$.
\end{theorem}
\noindent In fact, our result holds for $n\ge 2^{2^{t-2}}-2$, which is a slightly weaker precondition than $t\le \log\log n$.
Besides that, we prove that if $t$ is large in terms of $n$, the poset Ramsey number $R(A_t,Q_n)$ exceeds $n+3$.

\begin{theorem}\label{thm_antichain2}
Let $n,r,t\in\N$ such that $t> \binom{n+2r+1}{r}$. Then $R(A_t,Q_n)\ge n+2r+2$. \linebreak
In particular, if $t\ge n+4$, then $R(A_t,Q_n)\ge n+4$.
\end{theorem}

\noindent Stated explicitly in terms of $n$ and $t$, Theorems \ref{thm-MAIN} and \ref{thm_antichain2}, and $(\ast)$ provide the following.

\begin{corollary}\label{cor_ac_asym}
For $n,t\in\N$ with $n\ge 3$ and $t\ge 2$,
$$n+\frac{2\log t}{3+\log n}\le R(A_t,Q_n)\le n +\log t+\frac{\log\log t}{2}+2.$$
\end{corollary}

Our paper is structured as follows.
In Section \ref{sec:gen} we introduce basic definitions and notation, and discuss preliminary observations.
Section \ref{sec:non-triv} gives a proof of Theorem~\ref{thm:SD}.
In Section~\ref{sec:triv}, we present proofs of Theorems \ref{thm:CC} and \ref{thm:CCC}. Section \ref{sec:antichain} focusses on antichains, and gives a proof of Theorems \ref{thm_antichain} and \ref{thm_antichain2} as well as Corollary \ref{cor_ac_asym}.
Finally, in Section \ref{sec:survey} we summarize known proof techniques for bounding off-diagonal poset Ramsey numbers and collect some open problems.

%In this paper $\log$ always refers to the logarithm with base $2$.
In this paper we omit floors and ceilings where appropriate. The set of the first $n$ natural numbers is denoted by $[n]=\{1,\dots,n\}$.

\section{Preliminaries} \label{sec:gen}

\subsection{Poset notation and classic results}\label{sec:notation}

In the previous section we stated formal definitions of the \textit{Boolean lattice} $Q_n$, the $V$-shaped poset $\cV$, the \textit{chain} $C_t$, the antichain $A_t$, the \textit{$(s,t)$-subdivided diamond} $\cS\cD_{s,t}$ and the \textit{chain composition} $\CC_{t_1,t_2,\dots,t_\ell}$. Besides those we use the following notation for posets. Examples of all mentioned posets are given in Table \ref{table}.

The poset $\cN$ consists of four distinct vertices $A,B,C$, and $D$ such that $A\le C$, $C\ge B$, $B\le D$, $A||B$, $A||D$, and $C||D$.
The hook-shaped poset $\cJ$ has distinct vertices $A,B,C$, and $D$ where $B\le C\le D$, $A\ge B$, $A||C$, and $A||D$.

%An \textit{antichain} $A_\ell$ on $\ell$ elements is the poset $\CC_{1,\dots,1}$ with $\ell$ parameters $t_i=1$, i.e.\ a set of $\ell$ pairwise incomparable vertices.
The \textit{complete $\ell$-partite poset} $K_{t_1,\dots,t_\ell}$ is a poset on $\sum_{i=1}^\ell t_i$ many vertices defined as follows. 
For each index $i\in[\ell]$, there is a set of $t_i$ distinct vertices, called \textit{layer} $i$.
Every pair of vertices in the same layer is incomparable. For any two vertices $X$ and $Y$ belonging to different layers $i_X$ and $i_Y$ with $i_X<i_Y$, we have $X\le Y$.
\\

In this paper, we commonly consider a Boolean lattice $Q_n$ with a specified ground set.
Given a set $\cX$, the \textit{Boolean lattice} $\QQ(\cX)$ is the poset on all subsets of $\cX$ equipped with set inclusion relation. The \textit{dimension} of $\QQ(\cX)$ is $|\cX|$.
Let $\QQ$ be a Boolean lattice of dimension~$N$. For $\ell\in\{0,\dots,N\}$, we say that \textit{layer $\ell$} of $\QQ$ is the set of elements $\{Z\in\QQ : ~ |Z|=\ell\}$.
Note that $\QQ$ consists of $N+1$ layers and that each layer induces an antichain in $\QQ$.
A blue/red coloring of a Boolean lattice is \textit{layered} if within each layer every vertex receives the same color.
A vertex $X$ is the \textit{minimum} of a poset $P$ if $X\le Z$ for every $Z\in P$. Similarly, a \textit{maximum} is a vertex $X\in P$ such that $X\ge Z$ for every $Z\in P$.
\\

%Let $P$ be a poset. %The \textit{size} $|P|$ of $P$ is the number of vertices in $P$. 

%In Section \ref{sec:old_results} we introduced the poset parameters \textit{height} $h(P)$, \textit{width} $w(P)$, and the \textit{$2$-dimension} $\dim_2(P)$.
%.........
%%%%%%%%%%%%%%%
% of $P$ is the smallest dimension $N$ of a Boolean lattice $Q_N$ which contains a copy of $P$. It is a basic observation that this poset parameter is well-defined.
%Note that $h(Q_n)=n+1$ and $\dim_2(Q_n)=n$. Moreover, Sperner \cite{S} famously showed that the width of $Q_n$ is $w(Q_n)=\binom{n}{\lceil n/2\rceil}$, which is equivalent to the statement $\dim_2(A_t)=\alpha(t)$.

The following classic result is known as Dilworth's theorem \cite{Dil}.
\begin{theorem}[Dilworth \cite{Dil}]\label{dilworths}
If a poset $P$ contains no antichain of size $t$, then all vertices of $P$ can be covered by $t-1$ chains.
\end{theorem}

A chain in an $N$-dimensional Boolean lattice is said to be \textit{symmetric} 
if it consists of vertices $X_{\ell}\subset \dots \subset X_{N-\ell}$ for some $\ell\in\N$ such that $|X_i|=i$ for all $i\in\{\ell,\dots,N-\ell\}$.
De Bruijn, Tengbergen and Kruyswijk \cite{BEK} showed the following decomposition result.

\begin{theorem}[\cite{BEK}]\label{thm_scd}
The vertices of an $N$-dimensional Boolean lattice can be decomposed into pairwise disjoint, symmetric chains.
\end{theorem}

\subsection{Red Boolean lattice versus blue chain}

In this paper we often consider the Boolean lattice $\QQ(\cX\cup\cY)$ where $\cX$ and $\cY$ are two disjoints sets with $\cY\neq\varnothing$.
We denote a linear ordering $\tau$ of $\cY$ where $y_1<_\tau y_2<_\tau \dots <_\tau y_k$ by a sequence $\tau=(y_1,\dots,y_k)$ implying that $\cY=\{y_1,\dots,y_k\}$.
Given a linear ordering $\tau=(y_1,\dots,y_k)$ of $\cY$, a \textit{$\cY$-chain} corresponding to $\tau$ is a $(k+1)$-element chain $C$ in $\QQ(\cX\cup\cY)$ on vertices
$$C=\big\{X_0\cup \varnothing,X_1\cup\{y_1\},X_2\cup\{y_1,y_2\},\dots,X_{k}\cup \cY\big\},$$
 where $X_0 \subseteq X_1 \subseteq\dots \subseteq X_k\subseteq \cX$. 
Note that $\cY$-chains corresponding to distinct linear orderings of $\cY$ are distinct.
The following \textit{Chain Lemma} was proved implicitly by Gr\'osz, Methuku and Tompkins \cite{GMT}. 
Since it is a fundamental lemma in determining $R(P,Q_n)$, we restate its self-contained proof by following the lines of Axenovich and the author, see Lemma 8 in~\cite{QnV}.

\begin{lemma}[\cite{GMT}, Chain Lemma]\label{lem:chain}\label{chain_lem}
Let $\cX$ and $\cY$ be disjoint sets with $|\cX|=n$ and $|\cY|=k$. Let $\QQ(\cX\cup\cY)$ be a blue/red colored Boolean lattice.
Fix a linear ordering $\tau=(y_1,\dots,y_k)$  of $\cY$. Then there exists in $\QQ(\cX\cup\cY)$ either a red copy of $Q_n$, or a blue $\cY$-chain corresponding to $\tau$.
\end{lemma}

\begin{proof}
We denote the first $i$ elements of $\cY$ with respect to $\tau$ by $\cY[0]=\varnothing$ and $\cY[i]=\{y_1,\dots,y_i\}$ for $i\in[k]$.
Assume that there does not exist a blue $\cY$-chain corresponding to $\tau$. We show that there is a red copy of $Q_n$.
Recall that a copy of $P$ in $Q$ is the image of an embedding $\phi\colon P \to Q$.
% using the following characterisation of a \textit{copy}:
%We say that an \textit{embedding} of a poset $(P,\le_P)$ in a poset $(Q,\le_Q)$ is a function $\phi\colon P \to Q$ such that for any two $A,B\in P$, $A\le_P B$ if and only if $\phi(A)\le_Q \phi(B)$. 
%Note that the image of $\phi$ is an induced subposet of $Q$ which is isomorphic to $P$, i.e.\ a copy of $P$ in $Q$. 
In the following for every $X\subseteq\cX$, we recursively define a label $\ell_X\in\{0,\dots,k\}$ such that the function $$\phi\colon \QQ(\cX)\to \QQ(\cX\cup\cY), \ \phi(X)=X\cup \cY[\ell_X]$$
is an embedding with monochromatic red image. The image of such an embedding is a red copy of $Q_n$ as required. 
We choose labels $\ell_X$, $X\subseteq \cX$, with the following properties: 
\begin{enumerate}[label=(\roman*)]
\item For any $U\subseteq X$, \ $\ell_{U}\le \ell_{X}$.
\item There is a blue chain $C^X$ of length $\ell_X$ ``below'' the vertex $X\cup\cY[\ell_X]$, i.e.\ in the Boolean lattice $Q^{X}:=\QQ(X\cup  \cY[\ell_X])$ there is a blue $\cY[\ell_X]$-chain corresponding to the linear ordering $(y_1,\dots,y_{\ell_X})$.
\item The vertex $X\cup\cY[\ell_X]$ is colored red.
\end{enumerate}

First, consider the subset $\varnothing\subseteq\cX$. Let\ $\ell_\varnothing$ be the smallest index $\ell$, $0\le \ell\le k$, such that the vertex $\varnothing\cup\cY[\ell]$ is red.
If there is no such $\ell$, then\ the vertices $\varnothing\cup\cY[0], \dots, \varnothing\cup\cY[k]$ form a blue $\cY$-chain corresponding to $\tau$, a contradiction.
It is immediate that Properties (i) and (iii) hold for $\ell_\varnothing$. If $\ell_\varnothing=0$, then (ii) holds trivially. 
If $\ell_\varnothing\ge 1$, then vertices $\varnothing\cup\cY[0], \dots, \varnothing\cup\cY[\ell_{\varnothing}-1]$ form a blue chain of length $\ell_\varnothing$ as required for (ii).
\\

Now consider an arbitrary non-empty $X\subseteq\cX$ and suppose that for every $U\subsetneql X$ we already defined $\ell_{U}$ with Properties (i)-(iii). 
Let $\ell'_X=\max_{\{U\subsetneql X\}} \ell_U$ and fix some $W\subsetneql X$ with $\ell_W=\ell'_X$. Let $C^W$ be the blue chain obtained by Property (ii) for $W$.
We define $\ell_X$ as the smallest integer~$\ell$ with $\ell'_X\le \ell \le k$ such that the vertex $X\cup\cY[\ell]$ is red in the coloring of $\QQ(\cX\cup\cY)$.
If such an $\ell$ does not exist, i.e.\ if there is no red $X\cup\cY[\ell]$, the vertices $X\cup\cY[\ell'_X],\dots,X\cup\cY[k]$ form a blue chain of length $k-\ell'_X+1$. 
%By definition of $\ell'_X$ there is some $U\subsetneql X$ with $\ell_U=\ell'_X$.
%Property (ii) for $\ell_U$ implies that there is a blue chain of length $\ell'_X$ in $Q^{U}$. Note that $U\cup\cY[\ell_U]\subsetneql X\cup\cY[\ell'_X]$.
The chain $C^W$ is a blue chain ``below'' $W\cup\cY[\ell_W]$. Note that $W\cup\cY[\ell_W]\subsetneql X\cup\cY[\ell'_X]$, thus both chains combine to a chain $C^X$ of length $k+1$. 
It is easy to see that $C^X$ is a $\cY$-chain corresponding to $\tau$, so we arrive at a contradiction.
Thus, $\ell_X$ is well-defined. 

It is immediate that Property (iii) holds for $\ell_X$.
Furthermore, for $U\subsetneql X\subseteq \cX$ we have $\ell_{U}\le \ell'_X\le \ell_X$, thus (i) holds.
It remains to verify Property (ii) for $\ell_X$.
Recall that $W\subsetneql X$ such that $\ell_W=\ell'_X$. If $\ell_X=\ell'_X$, the chain $C^X:=C^W$ is as required. If $\ell_X\neq \ell'_X$, the chain $C^W$ together with vertices $X\cup\cY[\ell'_X], \dots, X\cup\cY[\ell_X-1]$ is a blue chain of length $\ell_X$, which verifies Property (ii).
\\

We use the labels $\ell_X$, $X\subseteq\cX$, to define an embedding of a Boolean lattice $Q_n$ in $\QQ(\cX\cup\cY)$.
Let $\phi\colon \QQ(\cX)\to \QQ(\cX\cup\cY)$ with $\phi(X)=X\cup \cY[\ell_X]$.
Property (iii) implies that the image of $\phi$ is colored monochromatically red.
We show that for any two $X_1,X_2\subseteq\cX$, it holds that $X_1\subseteq X_2$ if and only if $\phi(X_1)\subseteq\phi(X_2)$.
Indeed, if $\phi(X_1)\subseteq\phi(X_2)$, it is immediate that $X_1=\phi(X_1)\cap\cX \subseteq \phi(X_2)\cap\cX = X_2$.
Conversely, if $X_1\subseteq X_2$, then by Property (i) we see that $\ell_{X_1}\le \ell_{X_2}$. Thus $X_1\cup\cY[\ell_{X_1}]\subseteq X_2\cup\cY[\ell_{X_2}]$.
Therefore, $\phi$ is an embedding of $\QQ(\cX)$ with red image, so in particular $\QQ(\cX\cup\cY)$ contains a red copy of $Q_n$.
\end{proof}

The red copy of $Q_n$ obtained in this lemma has a strong additional property with respect to $\cX$ which is not needed here. 
For further details we refer the reader to Lemma 8 of \cite{QnV}. The following corollary is a simplified version of the Chain Lemma.

\begin{corollary}\label{cor:chain}
Let $n$ and $k$ be positive integers. Let $\QQ$ be a blue/red colored Boolean lattice of dimension $n+k$. 
Then $\QQ$ contains a red copy of $Q_n$ or a blue chain of length $k+1$.
\end{corollary}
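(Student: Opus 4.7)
The plan is to derive this directly from Lemma \ref{lem:chain}. Since $\QQ$ has dimension $n+k$, I can identify it with $\QQ(\cX\cup\cY)$ for some choice of disjoint sets $\cX$ and $\cY$ with $|\cX|=n$ and $|\cY|=k$; any such identification preserves the coloring on vertices. I then fix an arbitrary linear ordering $\tau=(y_1,\dots,y_k)$ of $\cY$ and invoke the Chain Lemma with respect to $\tau$.

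The Chain Lemma gives two alternatives. The first alternative is a red copy of $Q_n$ inside $\QQ(\cX\cup\cY)$, which is exactly one of the outcomes we want. The second alternative is a blue $\cY$-chain corresponding to $\tau$. By definition in the excerpt, such a $\cY$-chain has the form
\[
\{X_0\cup\varnothing,\ X_1\cup\{y_1\},\ X_2\cup\{y_1,y_2\},\ \dots,\ X_k\cup\cY\}
\]
with $X_0\subseteq X_1\subseteq\dots\subseteq X_k\subseteq\cX$, hence is a chain of $k+1$ vertices in $\QQ$. This is a blue chain of length $k+1$, matching the other desired outcome.

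There is essentially no obstacle here: the corollary is a forgetful reformulation of Lemma \ref{lem:chain} in which the role of the distinguished subset $\cY$ and its ordering $\tau$ is discarded, retaining only the weaker fact that a blue $\cY$-chain contains $k+1$ elements arranged in a chain. The only minor point to mention is that the choice of the partition $\cX\cup\cY$ and of $\tau$ is arbitrary, since the conclusion of the corollary does not refer to them.
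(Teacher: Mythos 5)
Your proof is correct and matches the paper's intent: the paper presents Corollary~\ref{cor:chain} without proof, simply calling it ``a simplified version of the Chain Lemma,'' and your derivation—splitting the ground set into $\cX$ and $\cY$ with $|\cX|=n$, $|\cY|=k$, choosing an arbitrary $\tau$, and applying Lemma~\ref{lem:chain}—is exactly the intended reduction.
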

\noindent Note that Corollary \ref{cor:chain} immediately implies the first upper bound in Theorem \ref{thm:CC}.

\subsection{Embedding of a Boolean lattice}

Axenovich and Walzer \cite{AW} showed that an embedding of a small Boolean lattice into a larger Boolean lattice has the following nice property, see Theorem 8 of \cite{AW}.
%An alternative proof of this result is given in \cite{QnV}.

\begin{lemma}[\cite{AW}]\label{embed_lem} %[Embedding Lemma]
Let $n\in\N$ and let $\cZ$ be a set with $|\cZ|>n$. If there is an embedding $\phi\colon Q_n\to \QQ(\cZ)$,
then there exist a subset $\cX\subsetneql\cZ$ with $|\cX|=n$, and an embedding $\phi'\colon \QQ(\cX)\to \QQ(\cZ)$ with the same image as $\phi$ such that
 $\phi'(X)\cap \cX=X$ for all $X\subseteq \cX$.
\end{lemma}

In other words, for any copy $Q'$ of $Q_n$ in the hosting lattice $\QQ(\cZ)$ there is some $\cX\subsetneql\cZ$ 
such that the vertex-wise restriction of $Q'$ to $\cX$ is equal to $\QQ(\cX)$.

%%%%%%%%%%%%%%%%%%%%%%%%%%%%%%%%%%%%%%%%%

\section{Upper bound on $R(\cS\cD_{s,t},Q_n)$}\label{sec:non-triv}

\subsection{Counting permutations}

In this subsection we bound the number of permutations with a special property, in preparation for our proof of Theorem \ref{thm:SD}. 
A permutation $\pi\colon [k]\to[k]$ is called \textit{$r$-proper} if for every $j\in[k]$, $|\{\ell\le j : \pi(\ell)\ge j-1\}|\le r$. If $r$ is clear from the context we usually omit the parameter.
For example, the permutation $\hat{\pi}$ given by $(\hat{\pi}(1),\dots,\hat{\pi}(k))=(k,1,3,4,5,\dots,k-1,2)$, see Figure \ref{fig:pi_hat}, is not $1$-proper because at $j=3$, $\{\ell\le 3 : \hat{\pi}(\ell)\ge2\}=\{1,3\}$. However, $\hat{\pi}$ is $2$-proper.

\begin{figure}[h]
\centering
\includegraphics[scale=0.6]{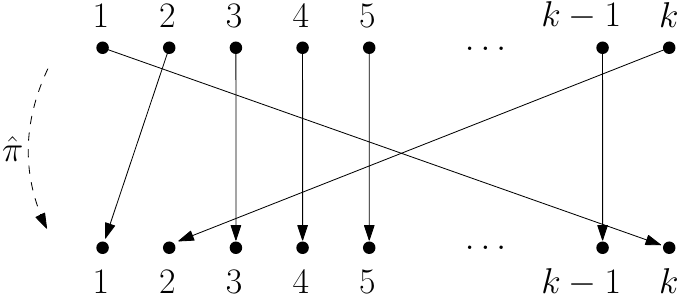}
\caption{The permutation $\hat{\pi}\colon [k]\to[k]$.}
\label{fig:pi_hat}
\end{figure}

\begin{lemma}\label{lem:count_perm}
Let $r,k\in\N$. There are at most $2^{(r+\log r )k}$ distinct $r$-proper permutations $\pi\colon [k]\to[k]$.
% such that for every $j\in[k]$, $\big|\{\ell\le j : \pi(\ell)\ge j-1\}\big|\le r$.
\end{lemma}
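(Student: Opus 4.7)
The plan is to construct an injective encoding of each $r$-proper permutation $\pi : [k] \to [k]$ into a bit-string of length at most $(r + \log r)k$, which immediately yields the bound $2^{(r+\log r)k}$. Given such a $\pi$, I first record its \emph{trajectory} $(P_j)_{j=1}^k$ defined by $P_j := \{\ell \le j : \pi(\ell) \ge j - 1\}$. By $r$-properness, $|P_j| \le r$ for every $j$, and a direct check shows that $P_{j+1}$ is obtained from $P_j$ by optionally removing a single element (namely $\pi^{-1}(j-1)$, when that element lies in $P_j$) and optionally adding $j+1$ (precisely when $\pi(j+1) \ge j$). Each transition $P_j \to P_{j+1}$ is therefore specified by at most $(|P_j|+1)\cdot 2 \le 2(r+1)$ choices, and the entire trajectory from the fixed start $P_1 = \{1\}$ is encoded in at most $(k-1)\lceil\log(2(r+1))\rceil$ bits.

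Next, I call a position $\ell$ \emph{high} if $\ell \in P_j$ for some $j$, and \emph{low} otherwise. The trajectory determines which positions are high, and for each high $\ell$ we read off $\pi(\ell) = \max\{j : \ell \in P_j\} - 1$. For a low position $\ell$ we have $\pi(\ell) \le \ell - 2$, and I must encode this value separately. Here $r$-properness enters a second time: since $|P_\ell| \le r$, at most $r$ of the first $\ell - 1$ positions carry values $\ge \ell - 1$, so at least $(\ell - 1) - r$ of them already carry values from $\{1, \ldots, \ell - 2\}$, leaving at most $r-1$ unused candidates for $\pi(\ell)$. Encoding the low values in increasing order of $\ell$ therefore costs at most $\lceil \log(r-1)\rceil$ bits per low position.

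Summing the two parts, the total encoding length is at most
\begin{equation*}
(k-1)\lceil \log(2(r+1))\rceil + k \lceil \log(r-1)\rceil \;\le\; k\bigl(1 + 2\log r\bigr) \;\le\; (r + \log r)k,
\end{equation*}
where the last step uses $1 + \log r \le r$ for $r \ge 2$; the case $r = 1$ is trivial since no $r$-proper permutation exists for $k \ge 2$. As $\pi$ can be uniquely reconstructed by first decoding the trajectory, reading off the values at high positions, and then decoding low values in increasing order of $\ell$, the bound on the count of $r$-proper permutations follows.

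The step I anticipate to be most delicate is the bound of $r - 1$ unused candidates at each low position, which rests on applying the $r$-properness constraint at precisely $j = \ell$; the trajectory transition structure itself is a routine consequence of the definitions and should cause no trouble.
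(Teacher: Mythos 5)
Your encoding argument is genuinely different from the paper's proof. The paper splits indices into bad ones ($\pi(i)\ge i$) and good ones ($\pi(i)\le i-1$), encodes the restriction of $\pi$ to the bad indices by properly colouring an interval graph with $r$ colours (at most $2^{rk}$ restrictions), and then notes that each good index admits at most $r$ values, giving $2^{rk}\cdot r^k$. You instead track the sets $P_j=\{\ell\le j:\pi(\ell)\ge j-1\}$ and encode (a) the trajectory transitions and (b) for each low position an index into the at most $r-1$ unused candidates. The combinatorial core of your argument is correct: the description of the transition $P_j\to P_{j+1}$ is right, $\pi(\ell)=\max\{j:\ell\in P_j\}-1$ on high positions, and the bound of at most $r-1$ candidates at a low position $\ell$ does follow from $r$-properness applied at $j=\ell$, since $\ell$ itself contributes nothing to that count. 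One point you should make explicit: the length of the low-value part depends on $\pi$, so to obtain an injection into $\{0,1\}^L$ for a single $L$ you should pad the codeword; this is harmless because the trajectory prefix determines the set of low positions and hence where the genuine data ends.

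The flaw is in the final arithmetic display. The inequality $(k-1)\lceil\log(2(r+1))\rceil+k\lceil\log(r-1)\rceil\le k\bigl(1+2\log r\bigr)$ is false for most $r\ge 4$: already for $r=4$ the per-position cost is $\lceil\log 10\rceil+\lceil\log 3\rceil=4+2=6$, while $1+2\log 4=5$, and in general the two ceilings can cost two extra bits that $1+2\log r$ does not absorb. Your final bound $(r+\log r)k$ is nevertheless attainable by this encoding, but it needs a corrected verification: for instance $\lceil\log(2(r+1))\rceil\le 3+\log r$ and $\lceil\log(r-1)\rceil\le 1+\log r$ give a per-position cost of at most $4+2\log r\le r+\log r$ once $r\ge 7$, and the cases $r\in\{2,3,4,5,6\}$ must be checked directly (costs $3,4,6,6,7$ against $3,\,3+\log 3,\,6,\,5+\log 5,\,6+\log 6$). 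Note that at $r=2$ and $r=4$ the comparison is exactly tight per position, so there is no slack to waste; the case $r=1$ is fine as you observe. With that repair, your proof establishes the lemma, and with slightly more care it even gives a marginally better constant than the paper's $2^{rk}r^k$ for large $r$.
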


\begin{proof}
For an $r$-proper permutation $\pi$, we say that an index $i\in[k]$ is \textit{bad} if $\pi(i)\ge i$, and \textit{good} if $\pi(i)\le i-1$.
Let $\cB_\pi$ and $\cG_\pi$ denote the set of indices that are bad and good, respectively, i.e.\ the sets partition $[k]$.
Again considering the example $(\hat{\pi}(1),\dots,\hat{\pi}(k))=(k,1,3,4,5,\dots,k-1,2)$, we have that $\cB_{\hat{\pi}}=\{1\}\cup\{3,4,\dots,k-1\}$ and $\cG_{\hat{\pi}}=\{2,k\}$.

Given a proper permutation $\pi$, the \textit{proper restriction} $\rho$ of $\pi$ is the restriction of $\pi$ to its bad indices, i.e.\ $\rho\colon \cB_\pi\to [k]$ with
$\rho(i)=\pi(i)$ for every $i$. Note that $\rho$ does not depend on $r$.
For example, the proper restriction of $\hat{\pi}$ is $\hat{\rho}\colon [k-1]\setminus\{2\}\to [k]$ with $\hat{\rho}(1)=k$ and $\hat{\rho}(i)=i$ for $3\le i\le k-1$.
Observe that a function $\rho$ can be the proper restriction of distinct proper permutations. Let $\Pi$ be the set of all proper permutations $\pi\colon [k]\to[k]$. 
If a function $\rho$ is the proper restriction of some $\pi\in \Pi$, we say that $\rho$ is a \textit{$\Pi$-restriction}.
To avoid ambiguity, we denote the domain of $\rho$  by $\cB_\rho$.
Inheriting the properties of a proper permutation, $\rho$ is injective and 
$$\big|\{\ell\in\cB_\rho : \ell\le j, \ \rho(\ell)\ge j-1\}\big|\le r.$$

In the following we bound $|\Pi|$ by first estimating $|\{\rho: ~ \rho \text{ is }\Pi\text{-restriction}\}|$,
and then bounding $|\{\pi\in \Pi: ~ \rho\text{ is proper restriction of }\pi\}|$ for every fixed $\rho$.
\\

\textbf{Claim 1:} There are at most $2^{rk}$ distinct $\Pi$-restrictions.\\
\textit{Proof of Claim 1:} 
We show that every $\Pi$-restriction has a distinct representation as a collection of $r$ vectors $V_1, \dots, V_{r}\in\{\zero,\one\}^k$, which implies that there are at most $2^{rk}$ $\Pi$-restrictions.
Let $\rho$ be a $\Pi$-restriction with domain $\cB_\rho$. For every $i\in\cB_\rho$ we define an integer interval $I_i=[i,\rho(i)+1]$.
Consider the \textit{interval graph} $G$ given by intervals $I_i$, 
i.e.\ the graph on vertex set $\cB_\rho$ where $\{i,j\}$ is an edge if and only if $i\neq j$ and $I_i\cap I_j\neq\varnothing$.
In the following we use terminology common in graph theory, for a formal introduction we refer the reader to Diestel~\cite{D}.

Next we bound the maximal size of a clique in $G$.
Suppose that vertices $i_1,\dots,i_m$ form a clique in $G$, then the intervals $I_{i_1},\dots,I_{i_m}$ pairwise intersect.
Thus there exists an integer $j\in[k]$ such that $j\in I_{i_1}\cap \dots \cap I_{i_m}$.
Now 
$$m=\big|\{\ell\in\cB_\rho : ~ j\in I_\ell\}\big|= \big|\{\ell\in\cB_\rho : ~\ell\le j, \ \rho(\ell)+1\ge j\}\big|\le r,$$
where the last inequality holds since $\rho$ is a proper restriction.
Thus there is no clique of size $r+1$ in $G$. It is common knowledge that interval graphs are perfect, %see e.g.\ Theorem page 135 in Diestel \cite{D}, 
so there exists a proper vertex coloring of $G$ using at most $r$ colors. Fix such a coloring $c$ of $G$ with set of colors $[r]$. 
Note that for each color class the corresponding intervals are pairwise disjoint.

For every fixed $s\in[r]$, let the set of indices with color $s$ be $\cB_s=\{i\in\cB_\rho: ~ c(I_i)=s\}$.
We define a vector $V_{s}\in\{\zero,\one\}^k$ as follows. 
Let 
$$V_s(i)=\dots=V_s(\rho(i))=\one\text{ for any }i\in\cB_s\quad\text{ and }\quad V_s(j)=\zero\text{ for all other }j\in[k].$$
%$V_s(i),\dots,V_s(\rho(i))=\one$ for every $i\in\cB_s$ and let $V_s(j)=\zero$ for all other entries $j\in[k]$.
Since the intervals $I_i$, $i\in\cB_s$, are pairwise disjoint, $V_s$ is well-defined. Moreover, we obtain that $V_s(\rho(i)+1)=\zero$ for every $i\in\cB_s$. 
This implies that $V_s(i-1)=\zero$, if defined, for $i\in\cB_s$.
Observe that the vector $V_s$ encodes all indices in $\cB_s$ and their respective functional values $\rho(i)$, $i\in\cB_s$:
If for some $j\in[k]$, $V_s(j)=\one$ and $V_s(j-1)=\zero$, then $j\in\cB_s$ and $\rho(j)$ is given by the maximal index $j'$ such that $V_s(j)=\dots=V_s(j')=\one$.

We obtain a vector representation $V_1, \dots, V_{r}$ of $\rho$.
It is easy to see that distinct $\Pi$-restrictions have distinct representations.
There are at most $(2^k)^{r}$ distinct such vector respresentations, which proves the claim.
\\

\textbf{Claim 2:} Given a fixed $\Pi$-restriction $\rho$, the number of proper permutations $\pi$ with proper restriction $\rho$ is at most $r^{k}$.\\
\textit{Proof of Claim 2:} 
Let $\rho$ be a fixed $\Pi$-restriction and let $\cG=[k]\setminus \cB_\rho$. 
%Note that every proper permutation $\pi$ with restriction $\rho$ has $\cB_\pi=\cB_\rho$ and thus $\cG_\pi=\cG$, i.e.\ $\cG$ denotes the good indices of every such $\pi$.
We count the possible assignments of good indices $i\in\cG$ in a proper permutation given that $\pi(\ell)=\rho(\ell)$ for every $\ell\in\cB_\rho$.
For this purpose we iterate through all good indices $i\in\cG$ in increasing order while counting the choices for each $\pi(i)$. Observe that $1\notin\cG$ since $\pi(1)\ge 1$ for any permutation $\pi$.
Fix an $i\in\cG$, i.e.\ $i\ge 2$. Suppose that all indices $\ell\in\cG\cap[i-1]$ are already assigned to an integer $\pi(\ell)\le \ell-1$ and all $\ell\in\cB_\rho$ are assigned to $\pi(\ell)=\rho(\ell)$.
There are two conditions on the choice of $\pi(i)$:
On the one hand $i$ is a good index, so we require $\pi(i)\in [i-1]$.
On the other hand $\pi$ is injective, thus $\pi(i)\neq \pi(\ell)$ for all $\ell<i$.
Therefore $\pi(i)\in [i-1]\setminus \{\pi(\ell)\in[i-1]: \ell<i\}$. We evaluate the size of this set using the fact that $|\{\ell<i:\pi(\ell)\ge i-1\}|\le r$:
\begin{align*}
\big|\{\pi(\ell)\in[i-1]: \ell<i\}\big|&=\big|\{\ell<i:\pi(\ell)\le i-1\}\big|= \big| [i-1]\setminus\{\ell<i:\pi(\ell)>i-1\}\big| \\
&\ge \big| [i-1]\setminus\{\ell<i:\pi(\ell)\ge i-1\}\big| \ge (i-1)-r.
\end{align*}
Thus also
$$\big| [i-1]\setminus \{\pi(\ell)\in[i-1]: \ell<i\} \big|\le(i-1)-(i-1-r)=r.$$
Hence, there are at most $r$ choices for selecting $\pi(i)$ for each $i\in\cG$. 
Note that $|\cG|\le k$, consequently the number of proper permutations with proper restriction $\rho$ is at most $r^k$. 
\\

\noindent Combining both claims, the number of proper permutations is at most 
$$\sum_{\rho\text{ is }\Pi\text{-restriction}} \big|\{\pi\in\Pi \colon ~ \rho \text{ is proper restriction of }\pi\}\big|\le 2^{rk}r^k=2^{(r+\log r)k}.$$
\end{proof}

The bound provided here is not best possible. With a more careful approach the number $N(k,r)$ of $r$-proper permutations $\pi\colon [k]\to[k]$ can be bounded between
$$r^k\le N(k,r) \le (2r)^{2k}.$$
Studying this extremal function might be of independent interest.

\subsection{Proof of Theorem \ref{thm:SD}}

Before presenting the proof of Theorem \ref{thm:SD} we give a lemma which is purely computational and follows the lines of a similar claim by Gr\'osz, Methuku and Tompkins \cite{GMT}.

\begin{lemma}\label{lem:count_est}
Let $n\in\N$, and let $c\in\R$ be a positive constant. 
Let $k=\frac{(2+\epsilon)n}{\log n}$ where $\epsilon=3(\log\log n +\log e + c +2)(\log n)^{-1}$. Then for sufficiently large $n$,
$$k!>2^{ck}\cdot 2^{2(n+k)}.$$
\end{lemma}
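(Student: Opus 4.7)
The plan is to take base-$2$ logarithms of both sides and apply Stirling's lower bound $k! \geq (k/e)^k$. This reduces the target inequality $k! > 2^{ck}\cdot 2^{2(n+k)}$ to showing $k(\log k - \log e - c - 2) > 2n$.

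Next I would substitute the definition $k=(2+\epsilon)n/\log n$ and divide through by $n/\log n$, which turns the inequality into the equivalent statement $(2+\epsilon)(\log k - \log e - c - 2) > 2\log n$. Expanding $\log k = \log(2+\epsilon) + \log n - \log\log n$ and rearranging isolates $\epsilon\log n$ on one side, producing
$$\epsilon\log n > (2+\epsilon)\bigl(\log\log n + \log e + c + 2 - \log(2+\epsilon)\bigr).$$

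Writing $A = \log\log n + \log e + c + 2$, the assumed value $\epsilon = 3A/\log n$ makes the left-hand side exactly $3A$. Since $\epsilon = o(1)$, for $n$ large enough we have $2+\epsilon < 3$ and $\log(2+\epsilon) > 0$, so the right-hand side is strictly less than $(2+\epsilon)A < 3A$, which closes the verification.

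The whole calculation is bookkeeping and I do not anticipate any serious obstacle. The one point to keep track of is that the constant $3$ (rather than $2$) in the definition of $\epsilon$ is exactly what supplies the multiplicative slack needed to absorb both the $(2+\epsilon)$ prefactor and the $-\log(2+\epsilon)$ correction arising from the expansion of $\log k$; with this cushion the comparison of the two sides becomes immediate for all sufficiently large $n$.
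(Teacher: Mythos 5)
Your argument is correct and follows essentially the same route as the paper: apply Stirling's bound $k!>(k/e)^k$, reduce to $k(\log k-\log e-c-2)>2n$, substitute $k=(2+\epsilon)n/\log n$, and use the factor $3$ in $\epsilon$ to absorb the $(2+\epsilon)$ prefactor (the paper simply drops the positive $\log(2+\epsilon)$ term rather than carrying it, but the bookkeeping is the same). No issues.
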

\begin{proof}
By Stirling's formula $k!>\left(\frac{k}{e}\right)^k=2^{k(\log k -\log e)}$.
We shall show that $k(\log k -\log e)>ck+2(n+k)$.
Using the fact that $k=\frac{(2+\epsilon)n}{\log n}$, we obtain
\begin{align*}
&k\big(\log k-\log e-c-2\big)-2n\\
&\ge \frac{(2+\epsilon)n }{\log n}\big(\log (2+\epsilon) + \log n -\log\log n-\log e-c-2\big) -2n\\
%&\ge (2+\epsilon)n-2n-\frac{(2+\epsilon)n }{\log n}\big(\log\log n +\log e+c+2\big) \\
&\ge \epsilon n- \frac{n}{\log n}(2+\epsilon)\big(\log\log n +\log e+c+2\big)>0,
\end{align*}
where the last inequality holds for sufficiently large $n$.
\end{proof}

\begin{proof}[Proof of Theorem \ref{thm:SD}]
For any $s\le t$ note that $\cS\cD_{s,t}$ is an induced subposet of $\cS\cD_{t,t}$, so it suffices to show the Ramsey bound for $s=t$.
%If $t=1$, then $\cS\cD_{t,t}=Q_2$ and the proof is complete by Gr\'osz, Methuku and Tompkins \cite{GMT}, so suppose $t\ge 2$.
Let $k=\frac{(2+\epsilon)n}{\log n}$ where %$\epsilon>0$ is determined later on.
$\epsilon=3(\log\log n +\log e + c +2)(\log n)^{-1}$ and $c=2t+2+\log (2t+2)$.
Let $\cX$ and $\cY$ be disjoint sets with $|\cX|=n$, $|\cY|=k$. Consider an arbitrary blue/red coloring of $\QQ(\cX\cup\cY)$ with no red copy of $Q_n$.
We shall show that there is a blue copy of $\cS\cD_{t,t}$ in this coloring. 

There are $k!$ linear orderings of $\cY$. For every linear ordering $\tau$ of $\cY$, Lemma \ref{lem:chain} provides a blue $\cY$-chain $C^\tau$ in $\QQ(\cX\cup\cY)$ corresponding to $\tau$, say on vertices $Z^\tau_0\subsetneql Z^\tau_1\subsetneql \dots \subsetneql Z^\tau_k$.
Consider the smallest vertex $Z^\tau_0$ as well as the largest vertex $Z^\tau_k$. 
Both vertices are subsets of $\cX\cup\cY$, so there are $2^{2(n+k)}$ distinct pairs $(Z^\tau_0,Z^\tau_k)$.
By pigeonhole principle there is a collection $\tau_1,\dots,\tau_m$ of $m=\frac{k!}{2^{2(n+k)}}$ distinct linear orderings of $\cY$ such that all of the corresponding $\cY$-chains $C^{\tau_i}$ have both $Z^{\tau_i}_0$ and $Z^{\tau_i}_k$ in common.
Lemma \ref{lem:count_est} shows that $m> 2^{ck}$. 
\\

Fix an arbitrary $\sigma\in\{\tau_1,\dots,\tau_m\}$. By relabelling $\cY$ we can suppose that $\sigma=(1,\dots,k)$, i.e.\ $1<_\sigma \dots <_\sigma k$.
%Observe that every linear ordering $\tau_j=(y^\tau_1,\dots,y^\tau_k)$ is obtained by permuting $\sigma$.
%We get a notion of closeness by comparing the first $i$ and first $i+t$ elements of $\cY$ with respect to $\sigma$ and $\tau_j$.
Consider a linear ordering $\tau_j$, $j\in[m]$, (allowing that $\tau_j=\sigma$) and let $\tau_j=(y_1,\dots,y_k)$.
Then we say that $\tau_j$ is \textit{$t$-close} to $\sigma$ for some $t\in\N$ if for every $i\in[k-t]$, either $[i]\subseteq \{y_1,\dots,y_{i+t}\}$ or $\{y_1,\dots,y_i\}\subseteq [i+t]$. 
For example the linear ordering $(4,5,\dots,k,1,2,3)$ is $3$-close to $\sigma$ since the first $i$ elements of this linear ordering are contained in $[i+3]$, for any $i\in[k-3]$.
However, our example is not $2$-close to $\sigma$, because neither $\{1\}\subseteq\{4,5,6\}$ nor $\{4\}\subseteq[3]$.
In the remaining proof we distinguish two cases.
If there is a linear ordering $\tau_j$ which is not $t$-close to $\sigma$, we build a copy of $\cS\cD_{t,t}$ from the $\cY$-chains corresponding to $\sigma$ and $\tau_j$.
If every linear ordering $\tau_1,\dots,\tau_m$ is $t$-close to $\sigma$, we find $m>2^{ck}$ permutations fulfilling the property of Lemma \ref{lem:count_perm}, thus we arrive at a contradiction.
\\

\textbf{Case 1:} There is a linear ordering $\tau\in\{\tau_1,\dots,\tau_m\}$ which is not $t$-close to $\sigma$.\smallskip\\
Suppose that the $\cY$-chains corresponding to $\sigma$ and $\tau$ are given by $Z^{\sigma}_0,\dots,Z^{\sigma}_k$ and  $Z^{\tau}_0,\dots,Z^{\tau}_k$, respectively. 
Recall that $Z^\sigma_0=Z^\tau_0$ and $Z^\sigma_k=Z^\tau_k$.
Since $\tau$ is not $t$-close to $\sigma$ there is an index $i\in[k-t]$ such that neither $[i]\subseteq \{y_1,\dots,y_{i+t}\}$ nor $\{y_1,\dots,y_{i}\}\subseteq [i+t]$.
Note that $i< k-t$ because for $i=k-t$ we have $[k-t]\subseteq[k]=\cY=\{y_1,\dots,y_{k}\}$.
The definition of $\cY$-chains provides that $Z^\sigma_i \cap \cY = [i]$ and $Z^\tau_{i+t} \cap \cY=\{y_1,\dots,y_{i+t}\}$, thus $Z^\sigma_i\not\subseteq Z^\tau_{i+t}$. By transitivity, $Z^\sigma_{j}\not\subseteq Z^\tau_{j'}$ for any two $j,j'\in\{i,\dots,i+t\}$.
Similarly, $Z^\tau_i\not\subseteq Z^\sigma_{i+t}$ and so $Z^\tau_j\not\subseteq Z^\sigma_{j'}$.
This implies that the set $$\cP=\left\{Z^\sigma_j,Z^\tau_j : ~ j\in \{0,k\}\cup\{i,\dots,i+t\}\right\}$$
forms a copy of $\cS\cD_{t,t}$.
Furthermore, every vertex of $\cP$ is included in a blue $\cY$-chain and thus colored blue.
This completes the proof for Case 1.
\\

\textbf{Case 2:} Every linear ordering $\tau\in\{\tau_1,\dots,\tau_m\}$ is $t$-close.\smallskip\\
Here we use the fact that every linear ordering $\tau_j$, $j\in[m]$, is obtained by permuting the linear ordering $\sigma$.
Fix an arbitrary $\tau\in\{\tau_1,\dots,\tau_m\}$, and let $\tau=(y_1,\dots,y_k)$. We say that the permutation \textit{corresponding} to $\tau$ is $\pi\colon [k]\to[k]$ with $\pi(\ell)=y_\ell$.
We show that $\pi$ has the following property.
\\

\textbf{Claim:} For every $j\in[k]$, $|\{\ell\le j : ~ \pi(\ell)> j+t\}|\le t$.\smallskip\\ 
%\textbf{Claim:} For every $j\in[k]$, $|\{\ell\le j : \pi(\ell)\ge j-1\}|\le 2t+2$.\smallskip\\ 
%We prove the stronger statement that for every $j\in[k]$, $|\{\ell\le j : \pi(\ell)> j+t\}|\le t$. 
The statement is trivially true if $j+t> k$. Now fix some $j\in[k-t]$.
By $t$-closeness of $\tau$ either $\{\pi(1),\dots,\pi(j)\}=\{y_1,\dots,y_j\}\subseteq [j+t]$ or $[j]\subseteq \{y_1,\dots,y_{j+t}\}=\{\pi(1),\dots,\pi(j+t)\}$.

If $\{\pi(1),\dots,\pi(j)\}\subseteq [j+t]$, then for every $\ell\le j$ we have $\pi(\ell) \le j+t$. 
Therefore $\{\ell\le j : \pi(\ell)> j+t\}=\varnothing$ and the statement holds.

If $[j]\subseteq \{\pi(1),\dots,\pi(j+t)\}$, then let $I=\{\pi(1),\dots,\pi(j+t)\}\setminus [j]$, note that $|I|=t$.
Observe that for every $\ell\le j$ with $\pi(\ell)> j+t$, we know in particular that $\pi(\ell)\notin[j]$, thus $\pi(\ell)\in I$.
Since $\pi$ is bijective, $$\big|\{\ell\le j : ~ \pi(\ell)> j+t\}\big|=\big|\{\pi(\ell): ~ \ell\le j, \ \pi(\ell)> j+t\}\big|\le |I|=t.$$
This proves the claim.
\\

In particular, $\pi$ has the property that $|\{\ell\le j : \pi(\ell)\ge j-1\}|\le 2t+2$ for every $j\in[k]$, i.e.\ $\pi$ is $(2t+2)$-proper.
Note that distinct linear orderings $\tau_i$, $i\in[m]$, correspond to distinct permutations $\pi_i\colon [k]\to[k]$.
Lemma \ref{lem:count_perm} provides that the number of $(2t+2)$-proper permutations $\pi_i$ is at most 
$$m\le 2^{(2t+2+\log (2t+2) )k}=2^{ck}.$$
Recall that by Lemma \ref{lem:count_est}, $m>2^{ck}$, so we arrive at a contradiction.
\end{proof}

\section{Proofs of Theorems \ref{thm_cP_UB} and \ref{lem_CA}} \label{sec:triv}

\begin{proof}[Proof of Theorem \ref{thm_cP_UB}]
First we determine $R(C_{t_1},Q_n)$. The lower bound follows from Theorem \ref{thm-MAIN}, so we only need to show that $R(C_{t_1},Q_n)\le n+t_1-1$.
Suppose that in a blue/red colored Boolean lattice $\QQ^1:=\QQ([n+t_1-1])$ there is no blue copy of $C_{t_1}$. 
Then by Corollary \ref{cor:chain} there is a red copy of $Q_n$ in $\QQ^1$.
\\

Next we consider the poset Ramsey number $R(\CC_{t_1,t_2},Q_n)$. Let $N=n+t_1+1$. 
In order to prove that $R(\CC_{t_1,t_2},Q_n)\le N$, we consider an arbitrarily blue/red colored Boolean lattice $\QQ^2:=\QQ([N])$. 
We show that there is either a red copy of $Q_n$ or a blue copy of $\CC_{t_1,t_2}$ in this coloring.
Corollary \ref{cor:chain} shows that there is either a red copy of $Q_n$ or a blue chain of length $t_1+2$.
If the former happens, the proof is already complete, so suppose that there is a blue chain $C$ on vertices $Z_0\subset Z_1 \subset \dots \subset Z_{t_1+1}$.
Consider the subposet $C'$ of $C$ on vertices $Z_1,\dots,Z_{t_1}$, i.e.\ obtained by discarding the minimum and maximum vertex of $C$.
The poset $C'$ is a chain of length $t_1$.
Note that $Z_1\neq \varnothing$ since it has a proper subset $Z_0\subset Z_1$, thus there is an element $a\in Z_1$.
Similarly, $Z_{t_1}\neq [N]$, so we find a $b\in[N]\setminus Z_{t_1}$. We obtain that $\{a\}\subseteq Z_1\subseteq \dots \subseteq Z_{t_1} \subseteq [N]\setminus\{b\}$.

\begin{figure}[h]
\centering
\includegraphics[scale=0.6]{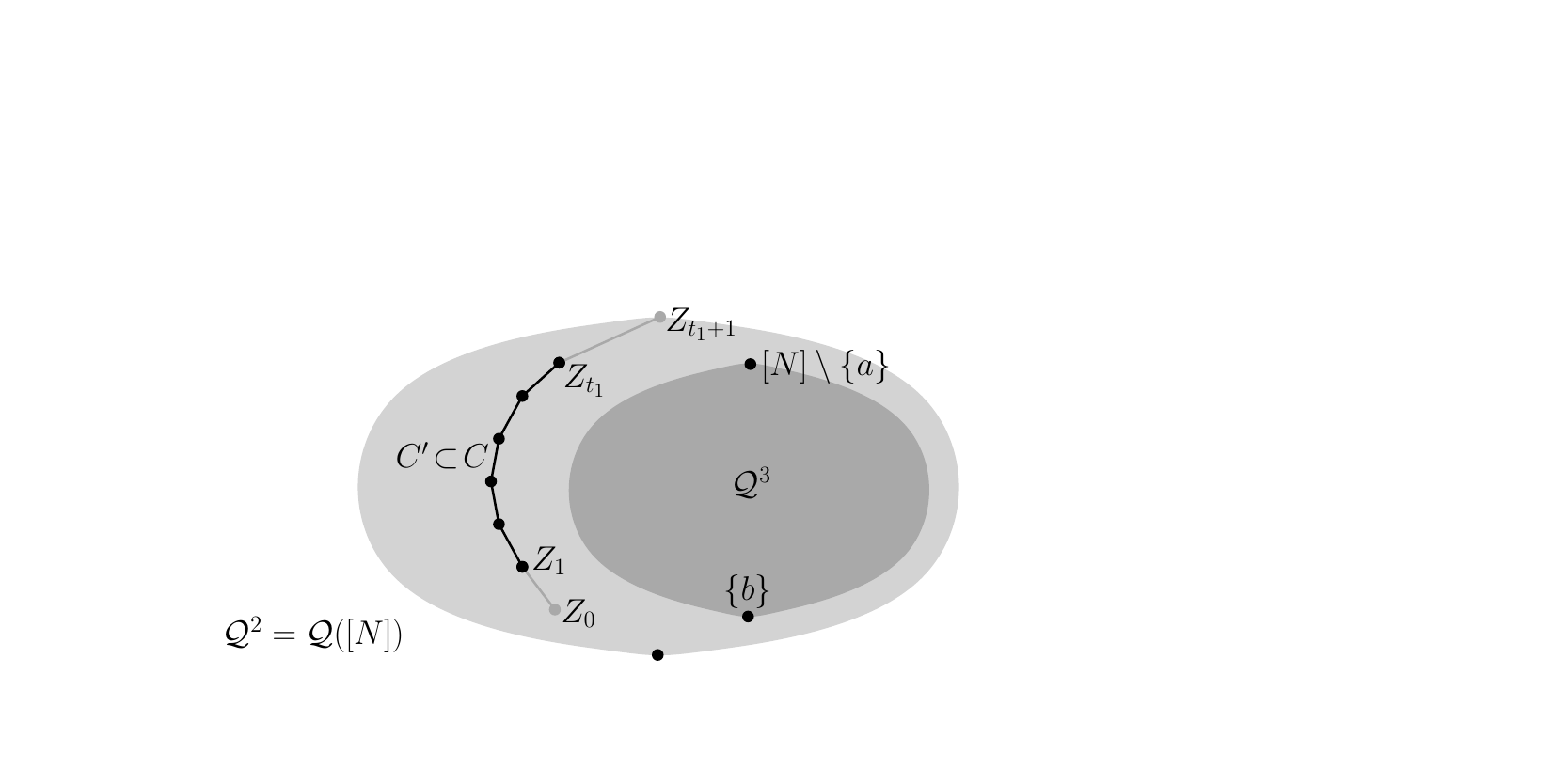}
\caption{$C'$ and $\QQ^3$ are parallel}
\label{fig:CQ3}
\end{figure}
We consider the subposet $\QQ^3:=\{Z\in\QQ^2 \colon b\in Z,\ a\notin Z\}$, see Figure \ref{fig:CQ3}. Note that $\QQ^3$ is a Boolean lattice of dimension $N-2=n+t_1-1$ with a blue/red coloring induced by the coloring of $\QQ^2$.
The posets $\QQ^3$ and $C'$ are parallel, i.e.\ for every two $Z\in\QQ^3$ and $U\in C'$ we have $Z||U$, because $a\in U$, $a\notin Z$ and $b\in Z$, $b\notin U$.
Applying the first bound shown in this proof we obtain that $R(C_{t_2},Q_n)=n+t_2-1\le n+t_1-1$.
Since $\QQ^3$ has dimension $n+t_1-1$, there is either a red copy of $Q_n$ or a blue copy $D$ of $C_{t_2}$ in $\QQ^3$.
In the first case the proof is complete, and in the second case the vertices of $C'$ and $D$ form a blue copy of $\CC_{t_1,t_2}$, which also completes the proof.
Thus, $R(\CC_{t_1,t_2},Q_n)\le N$.
\\

It remains to show that $R(\CC_{t_1,t_2},Q_n)\ge N=n+t_1+1$. 
We verify this lower bound by introducing a layered coloring of $Q_{N-1}$ which neither contains a blue copy of $\CC_{t_1,t_2}$ nor a red copy of $Q_n$.
Consider the Boolean lattice $\QQ^4:=\QQ([N-1])$ which is colored such that
layer $0$ and layer $N-1$ (i.e.\ both one-element layers) are monochromatically blue, $t_1-1$ arbitrary additional layers are blue,
and all remaining $N-(t_1+1)=n$ layers are red.

Since $Q_n$ has height $n+1$, there is no monochromatic red copy of $Q_n$ in this coloring.
Assume that there is a blue copy $\cP$ of $\CC_{t_1,t_2}$ in~$\QQ^4$.
Note that $\varnothing\notin\cP$ and $[N-1]\notin\cP$, 
because each of these two vertices is comparable to all other vertices of $\QQ^4$ and in $\cP$ no vertex comparable to all other vertices of $\cP$.
Other than $\varnothing$ and $[N-1]$, $\QQ^4$ has only $t_1-1$ layers containing blue vertices, but $h(\cP)=h(\CC_{t_1,t_2})=t_1$, so there can not be a blue copy $\cP$ of $\CC_{t_1,t_2}$.
\end{proof}

%\subsection{Proof of Theorem \ref{lem_CA}}

\begin{proof}[Proof of Theorem \ref{lem_CA}]
Let $N=n+t+2$. In order to show the upper bound on $R(\CC_{t,t-1,t'},Q_n)$, fix an arbitrary blue/red coloring of $\QQ^1:=\QQ([N])$.
We shall find either a red copy of $Q_n$ or a blue copy of $\CC_{t,t-1,t'}$ in $\QQ^1$.
By Corollary \ref{cor:chain} we find either a red copy of $Q_n$ or a blue chain $C$ of length $t+2$.
In the first case the proof is complete, so assume the second case. 
Let $C$ have vertices $Z_0\subset \dots \subset Z_{t+1}$.
Consider the subposet $C'$ of $C$ on vertices $Z_1,\dots,Z_{t}$, which is a chain of length $t$.
Note that $Z_1\neq\varnothing$ and $Z_t\neq [N]$, thus we find some $a,b\in[N]$ such that
$\{a\}\subseteq Z_1\subseteq \dots \subseteq Z_{t} \subseteq [N]\setminus\{b\}$.

Now consider the subposet $\QQ^2:=\{Z\in\QQ^1 \colon b\in Z,\ a\notin Z\}$, which is a Boolean lattice of dimension $N-2=n+t$ with a blue/red coloring induced by $\QQ^1$. 
Theorem \ref{thm_cP_UB} yields that $\QQ^2$ contains either a red copy of $Q_n$ or a blue copy $\cP$ of $\CC_{t-1,t'}$.
In the first case we found a red copy of $Q_n$ in $\QQ^1$. 
In the second case we know for every two $Z\in \cP$ and $U\in C'$ that $a\in U$, $a\notin Z$, $b\notin U$ and $b\in Z$, so $Z||U$.
Consequently, the vertices of $\cP$ and $C'$ induce a blue copy of $\CC_{t,t-1,t'}$.
\\

It remains to verify the lower bound.
For this purpose we shall find a blue/red coloring of $\QQ^3:=\QQ([N-1])$ which neither contains a red copy of $Q_n$ nor a blue copy of $\CC_{t,t-1,t'}$.
Note that $t\ge t'+1\ge 2$.
Color all vertices in the four layers $\ell\in\{0,1,N-2,N-1\}$ monochromatically in blue. Color $t-2$ arbitrary additional layers blue and all remaining $N-(4+t-2)=n$ layers red. 
Clearly, this coloring contains no red copy of $Q_n$ since $h(Q_n)=n+1$. 
Assume for a contradiction that there is a blue copy $\cP$ of $\CC_{t,t-1,t'}$ in $\QQ^3$. 
In~$\cP$ denote the blue chain of length $t$ by $C$, see Figure \ref{fig:CDF}. Furthermore there is a chain $D$ of length $t-1$ in $\cP$ which is parallel to $C$. 
Let $F$ be a vertex of $\cP$ which is neither in $C$ nor in $D$, i.e.\ $F$ is incomparable to every vertex in $C$ or $D$.

\begin{figure}[h]
\centering
\includegraphics[scale=0.6]{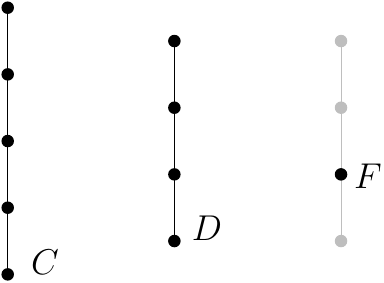}
\caption{Chains $C$ and $D$ and vertex $F$ in a copy of $\CC_{t,t-1,t'}$}
\label{fig:CDF}
\end{figure}

It is easy to see that neither $\varnothing$ nor $[N-1]$ are in $\cP$, because both of these vertices are comparable to every other vertex in $\QQ^3$ which does not occur in $\cP$.
Excluding the two vertices $\varnothing$ and $[N-1]$, there are exactly $t$ layers containing blue vertices, including layer $1$ and layer $N-2$.

Recall that $C$ is a blue chain of length $t$, therefore the minimum vertex $Z_1$ of $C$ is in layer~$1$, while the maximum vertex $Z_t$ of $C$ is in layer~$N-2$.
Thus we find $a,b\in[N-1]$ such that $Z_1=\{a\}$ and $Z_t=[N-1]\backslash\{b\}$.
Let $\QQ^4:=\{Z\in\QQ^3 \colon b\in Z ,\ a\notin Z\}$, this poset is a Boolean lattice.
Since $F$ is incomparable to every vertex in $C$, we obtain that $a\notin F$ and $b\in F$. Thus $F\in\QQ^4$ and in particular $F\supseteq \{b\}$ and $F\subseteq [N-1]\backslash \{a\}$.
Similarly, $D\subseteq\QQ^4$.

The coloring of $\QQ^3$ induces a layered coloring of $\QQ^4$ where exactly $t$ of the layers are blue.
Two of these blue layers in $\QQ^4$ are the one-element layers given by $\{b\}$ and $[N-1]\setminus\{a\}$.
Since the chain $D$ has height $h(D)=t-1$, either $\{b\}\in D$ or $[N-1]\backslash \{a\}\in D$. 
This is a contradiction, because $F$ is incomparable to every vertex in $D$ but both $\{b\}$ and $[N-1]\backslash \{a\}$ are comparable to $F$.
\end{proof}

%%%%%%%%%%%%%%%%%%%%%%%%%%%%%%%%%%%%%%%%%%%%%%%%%%%%%%%%%%%%%%%%%%%%%%%%%%%%%%%%%%%%%%%

\section{Bounds on $R(A_t,Q_n)$}\label{sec:antichain}

\subsection{Erd\H{o}s-Szekeres variant}
In preparation for the proof of Theorem \ref{thm_antichain} we reshape the following well-known result of Erd\H{o}s and Szekeres \cite{ES}.

\begin{theorem}[Erd\H{o}s-Szekeres \cite{ES}]\label{erdos-szekeres}
Let $m\in\N$. For an $(m^2+1)$-element set $\cZ$, let $S=(a_1,a_2,\dots,a_{m^2+1})$ be a sequence consisting of distinct elements of $\cZ$.
Let $\tau$ be an arbitrary linear ordering of $\cZ=\{a_1,\dots,a_{m^2+1}\}$.
Then there exists a subsequence $(a_{i_1},\dots,a_{i_{m+1}})$ of $S$ of length $m+1$ such that \quad
$a_{i_1}<_\tau \dots <_\tau a_{i_{m+1}}\quad\text{or}\quad a_{i_{m+1}}<_\tau \dots <_\tau a_{i_1}.$
\end{theorem}

In the following we refer to a finite sequence of distinct elements as a \textit{permutation sequence} on the set of these elements.
We say that a sequence $(b_1,b_2,\dots,b_\ell)$ is an \textit{undirected subsequence} of some sequence $S$ 
if either $(b_1,b_2,\dots,b_\ell)$ or $(b_\ell,b_{\ell-1},\dots,b_1)$ is a subsequence of $S$. 
Let $S^1,\dots,S^m$, $m\in\N$, be a collection of permutation sequences on the same set. 
If $(b_1,b_2,\dots,b_\ell)$ is an undirected subsequence of every $S^i$, it is referred to as a \textit{common undirected subsequence} of $S^1,\dots,S^m$.

\begin{corollary}\label{ac_sequence}\label{cor_es}
Let $S$ and $T$ be two permutation sequences on an $(m^2+1)$-element set $\cZ$.
Then there exists a common undirected subsequence of $S$ and $T$ which has length $m+1$.
\end{corollary}
\begin{proof}
Let $S=(a_1,a_2,\dots,a_{m^2+1})$, i.e.\ $\cZ=\{a_1,\dots,a_{m^2+1}\}$. For $\ell\in[m^2+1]$, let $j_\ell$ be indices such that $T=(a_{j_1},\dots,a_{j_{m^2+1}})$.
Consider the linear ordering $\tau$ given by $a_{j_1}<_\tau \dots <_\tau a_{j_{m^2+1}}$.
Then the Erd\H{o}s-Szekeres theorem, Theorem \ref{erdos-szekeres}, yields a subsequence $(a_{i_1},\dots,a_{i_{m+1}})$ of $S$ which is also an undirected subsequence of $T$.
In particular, $(a_{i_1},\dots,a_{i_{m+1}})$ is a common undirected subsequence of $S$ and $T$.
\end{proof}

By iteratively applying Corollary \ref{cor_es} we obtain a lemma that we need later on.

\begin{lemma}\label{ac_ordering}
Let $d\in\N$ and $N\ge 2^{2^{d-1}}+1$. Let $\cZ$ be an $N$-element set.
Let $\tau_1,\dots,\tau_d$ be arbitrary linear orderings of $\cZ$. Then there exist pairwise distinct $x,y,z\in\cZ$ such that for every $i\in[d]$,
$$x <_{\tau_i} y <_{\tau_i} z\qquad \text{ or }\qquad z<_{\tau_i}y<_{\tau_i} x.$$
\end{lemma}
\begin{proof}
For each $i\in[d]$, say that $\tau_i$ is given by $a^i_1<_{\tau_i} a^i_2<_{\tau_i}\dots <_{\tau_i} a^i_N$.
Then let $S(\tau_i)$ be the sequence $(a^i_1, a^i_2,\dots , a^i_N)$. Note that $S(\tau_i)$ is a permutation sequence on $\cZ$.
We show that there is a common undirected subsequence of $S(\tau_1),\dots,S(\tau_d)$ of length~$3$, 
because for such a subsequence $(x,y,z)$ either %$(x,y,z)$ or $(z,y,x)$ is contained in $S(\tau_i)$ for every $i\in[d]$ and hence 
$x <_{\tau_i} y <_{\tau_i} z$ or $z<_{\tau_i}y<_{\tau_i} x$ for every $i\in[d]$.
We proceed with an iterative argument.
\\

Let $T^1=S(\tau_1)$; note that $|T^1|\ge2^{2^{d-1}}+1$. 
For $i\in[d-1]$, suppose that $T^i$ is a common undirected subsequence of all $S(\tau_j)$, $j\in[i]$, and has length at least $2^{2^{d-i}}+1$.
Let $\cZ^i$ be the underlying set of $T^i$, and let $S^{i}$ be the restriction of $S(\tau_{i+1})$ to $\cZ^i$.
Then both $T^i$ and $S^{i}$ are permutation sequences on $\cZ^i$.
By Corollary \ref{ac_sequence}, there is a common undirected subsequence $T^{i+1}$ of $T^i$ and $S^{i}$ 
of length at least $(2^{2^{d-i}})^{\frac12}+1=2^{2^{d-(i+1)}}+1$.
Since $T^{i+1}$ is an undirected subsequence of $T^i$, $T^{i+1}$ is also an undirected subsequence of every $S(\tau_j)$, $j\in[i]$.
Furthermore, because $T^{i+1}$ is an undirected subsequence of $S^{i}$, it is also an undirected subsequence of $S(\tau_{i+1})$.

After $d-1$ steps, we obtain a sequence $T^d$ of length at least $2^{2^{0}}+1=3$ which is a common undirected sequence of all $S^j$, $j\in[d]$.
Choose an arbitrary $3$-element subsequence $(x,y,z)$ of $T^d$. Then $x$, $y$ and $z$ have the desired properties.
\end{proof}

We remark that the bound $N\ge 2^{2^{d-1}}+1$ in Lemma \ref{ac_ordering} is tight: It is widely known that there is a sequence of $m^2$ distinct elements which does not meet the property in the Erd\H{o}s-Szekeres Theorem. Given such a sequence we can straightforwardly construct a collection of $d$ linear orderings of a $2^{2^{d-1}}$-element set such that no triple $x,y,z$ has the property of Lemma \ref{ac_ordering}.

\subsection{Proof of Theorem \ref{thm_antichain}}\label{sec_ac_small}

\begin{proof}[Proof of Theorem \ref{thm_antichain}]
%It is a consequence of Theorem \ref{thm_cP_UB} that $R(A_2,Q_n)=n+2$.
%Lemma \ref{ac_small} provides that $R(A_3,Q_n)\ge n+3$.
 Note that $R(A_{3},Q_n)\le R(A_{t},Q_n)\le R(A_{\log \log n},Q_n)$ for all $3\le t\le \log \log n$, 
so it suffices to show that $R(A_3,Q_n)\ge n+3$ and $R(A_{\log\log n},Q_n)\le n+3$. Let $N=n+3$.
\\

For the lower bound, we consider the following coloring of the Boolean lattice $\QQ=\QQ([N-1])$: For all $i\in[N-1]$, color $[i]$ and $[N-1]\backslash[i]$ in blue and color all remaining vertices in red.
Observe that all blue vertices can be covered by the two chains $\big\{[i]\colon ~ i\in[N-1]\big\}$ and $\big\{[N-1]\backslash[i]\colon ~ i\in[N-1]\big\}$, 
thus among any three distinct blue vertices we find two vertices contained in the same chain, i.e.\ there is no blue copy of $A_3$ in $\QQ$.
Now, assume towards contradiction that there is a red copy of $Q_n$ in $\QQ$.
Lemma~\ref{embed_lem} provides that there is a set $\cX\subset [N-1]$, $|\cX|=n$, and an embedding $\phi\colon \QQ(\cX)\to \QQ$ 
such that the image of $\phi$ is monochromatic red and $\phi(X)\cap \cX=X$ for all $X\subseteq\cX$.
Note that $\phi(\varnothing)\neq \varnothing$ as $\varnothing$ is colored blue in $\QQ$, so say that $a\in\phi(\varnothing)$. 
Then $a\notin\cX$ since $\phi(\varnothing)\cap\cX=\varnothing$.
Similarly, we know $\phi(\cX)\neq[N-1]$, so we find an element $b\in[N-1]\setminus\phi(\cX)$ with $b\notin\cX$.
Observe that $a\in \phi(\varnothing)\subseteq\phi(\cX)$, but $b\notin\phi(\cX)$, hence $a\neq b$.
%For the embedding $\phi$ we know that $\phi(\varnothing) \subseteq\phi(\cX)$, hence $a\neq b$.
Recall that $|\cX|=n=(N-1)-2$, thus $\cX=[N-1]\backslash\{a,b\}$. 
Because $\phi$ is an embedding of $\QQ(\cX)$ and every vertex of $\QQ(\cX)$ is comparable to $\varnothing$ and $\cX$, 
we obtain that $a\in\phi(X)$ and $b\notin\phi(X)$ for every $X\subseteq\cX$. 
Now the fact that $\phi(X)\cap \cX=X$ implies $\phi(X)=X\cup\{a\}$ for all $X\subseteq\cX$.

If $b\in\{a+1,\dots,N-1\}$, then $[a-1]\subseteq\cX$.
Thus $\phi([a-1])=[a-1]\cup\{a\}=[a]$, but this vertex is colored blue in $\QQ$. This is a contradiction to the fact that $\phi$ has a monochromatic red image.

If $b\in\{1,\dots,a-1\}$, then $[N-1]\backslash [a]\subseteq\cX$. Now $\phi([N-1]\backslash [a])=([N-1]\backslash [a])\cup\{a\}=[N-1]\backslash [a-1]$ which is colored blue in $\QQ$, so again we reach a contradiction.
 \\
 
%Applying Lemma \ref{ac_small}, it remains to prove the upper bound for $t\ge 4$. 
For the upper bound, let $\cZ$ be an arbitrary $N$-element set. 
Consider an arbitrary coloring of the Boolean lattice $\QQ(\cZ)$ which contains no blue copy of $A_t$ where $t\le \log\log n$.
We shall show that there is a red copy of $Q_n$ in $\QQ(\cZ)$.
\\

By Dilworth's theorem, Theorem \ref{dilworths}, there is a collection of $t-1$ chains $C_1,\dots, C_{t-1}$ which cover all blue vertices.
Without loss of generality, we assume that every $C_i$ is a full chain, i.e.\ a chain on $N+1$ vertices of the form $\varnothing, \{a_1\},\{a_1,a_2\},\dots, \{a_1,\dots,a_N\}=\cZ$.
Then we say that each $C_i$, $i\in[t-1]$, \textit{corresponds} to the unique linear ordering $\tau_i$ of $\cZ$ given by $a_1<_{\tau_i}a_2<_{\tau_i}\dots<_{\tau_i}a_N$. 
Now we apply Lemma \ref{ac_ordering} to the collection of linear orderings $\tau_i$, $i\in [t-1]$, and 
obtain three distinct elements $x,y,z\in\cZ$ such that for every $i\in[t-1]$, $x <_{\tau_i} y <_{\tau_i} z$ or $z<_{\tau_i}y<_{\tau_i} x$.
In particular, if for some blue vertex $Z\in\QQ(\cZ)$ we have $x\in Z$ and $z\in Z$, then also $y\in Z$.
This is because $Z$ is covered by chain $C_j$ for some $j\in[t-1]$ and in the corresponding linear ordering $\tau_j$ either $y<_{\tau_j} z$ or $y<_{\tau_j} x$.

Now assume towards a contradiction that there is no red copy of $Q_n$ in $\QQ(\cZ)$, then let $\cY=\{x,y,z\}$ and let $\cX=\cZ\backslash\cY$. 
%Let $\tau^*$ be a linear ordering of $\cY$ such that $b_1<_{\tau^*} b_2 <_{\tau^*} a$.
Let $\tau$ be the linear ordering of $\cY$ given by $x<_{\tau}z<_\tau y$. 
Applying Lemma \ref{chain_lem} we obtain that there exists a chain containing a blue vertex of the form $X_2\cup\{x,z\}$ for some $X_2\subseteq\cX$.
Then $y\notin X_2$, so this blue vertex contains both $x$ and $z$ but does not contain $y$, which is a contradiction.
\end{proof}

\subsection{Proofs of Theorem \ref{thm_antichain2} and Corollary \ref{cor_ac_asym}}\label{sec_ac_large}
If $t$ is large in terms of $n$, an improved lower bound on $R(A_t,Q_n)$ holds, which is shown using a layered construction, i.e.\ a coloring of the hosting lattice where each layer is colored monochromatically.

\begin{proof}[Proof of Theorem \ref{thm_antichain2}]
Let $n,r,t\in\N$ with $t> \binom{n+2r+1}{r}$.
Let $\QQ=\QQ(\cZ)$ be the Boolean lattice on an arbitrary ground set $\cZ$ on $n+2r+1$ elements.
Consider the following layered blue/red coloring of $\QQ$:
Color all vertices $X\in\QQ$ such that $|X|\le r$ or $|X|\ge n+r+1$ in blue and all other vertices in red.
We see that $\QQ$ consists of $n+2r+2$ layers of which $2r+2$ are colored monochromatically blue and all remaining $n$ layers are colored monochromatically red.
Since $h(Q_n)=n+1$, there is no red copy of $Q_n$ in this coloring. It remains to show that there is no blue copy of $A_t$ in $\QQ$.
\\

We fix a symmetric chain decomposition of $\QQ$ as provided by Theorem \ref{thm_scd}. Let $\CC$ be the family of only those symmetric chains which contain a vertex of size exactly $r$.
For any vertex $X$ of size at most $r$ or at least $n+r+1$, there is some chain $C(X)$ in the decomposition which covers $X$. 
Using the properties of a symmetric chain we obtain $C(X)\in\CC$, thus the chains in $\CC$ cover all vertices of size at most $r$ or at least $n+r+1$.
Thus all blue vertices are covered by chains in $\CC$, whereat $|\CC|=\binom{n+2r+1}{r}<t$. Then Dilworth's theorem implies that there is no blue copy of $A_t$ in $\QQ$.
\end{proof}

\begin{proof}[Proof of Corollary \ref{cor_ac_asym}]
The upper bound is obtained from the general bound by Axenovich and Walzer \cite{AW} and $(\ast)$.
For given $n\ge 3$ and $t\ge 2$, let $r$ be the largest non-negative integer with $t> \binom{n+2r+1}{r}$.
In Theorem \ref{thm_antichain2} we showed that $R(A_t,Q_n)\ge n+2r+2$.
Now we bound $r$ in terms of $n$ and $t$.
Note that by the maximality of $r$,
$$t\le \binom{n+2r+3}{r+1}\le \left(\frac{e(n+2r+3)}{r+1}\right)^{r+1}\le \left(\frac{en}{r+1}+3e\right)^{r+1}\le (2en)^{r+1}.$$
Thus, $r+1\ge \frac{\log t}{\log(2e)+\log n}\ge \frac{\log t}{3+\log n}$.
%If $r+1\le 4n$, then $t\le (10en)^{r+1}$, and so $r+1\ge \frac{\log t}{\log(10e)+\log n}\ge \frac{\log t}{5+\log n}$. %4.76
%Otherwise, if $r+1> 4n$, we have $t\le \left(e\big(\tfrac{1}{4}+2+\tfrac{1}{r+1}\big)\right)^{r+1}.$
%Note that $r+1\ge 4n\ge12$, so $t\le \left(\tfrac{7e}{3}\right)^{r+1}$ and thus $r+1\ge \frac{\log t}{\log(7e)-\log 3}\ge \frac{\log t}{5+\log n}$ for $n\ge 3$.
%Consequently, $R(A_t,Q_n)\ge n+\frac{2\log t}{5+\log n}$.
\end{proof}

%%%%%%%%%%%%%%%%%%%%%%%%%%%%%%%%%%%%%%%%%%%%%%%%%%%%%%%%%%%%%%%%%%%%%%%%%%%%%%%%%%%%%%%

\section{Summary of known approaches} \label{sec:survey}

\subsection{Proof techniques}

In order to give an insight on research of off-diagonal poset Ramsey numbers, we briefly survey known proof methods for bounding $R(P,Q_n)$ when $P$ is fixed.
For some easy-to-analyze posets, e.g.\ several trivial posets, no advanced tools are required to get an exact bound. 
An example for this is our proof of Theorem \ref{thm:CC} in Section \ref{sec:triv}, which only relies on the basic observation Corollary \ref{cor:chain}.
Many other posets require more involved proof techniques, and there are three methods which provides \textbf{upper bounds} on $R(P,Q_n)$.

A \textit{blob approach} is used in particular for Boolean lattices $P=Q_m$.
The idea behind this method is to consider a blue/red colored hosting lattice in which we define many \textit{blobs}, i.e.\ small sublattices that are pairwise disjoint, arranged in a product structure.
If any blob is monochromatically blue, we obtain a blue copy of $P$. Otherwise, we find a red copy of $Q_n$ by choosing one red vertex in each blob.
This proof technique was introduced by Axenovich and Walzer \cite{AW}, for a more refined version see Lu and Thompson \cite{LT}.

Gr\'osz, Methuku and Tompkins \cite{GMT} introduced another proof technique, the \textit{chain approach}. 
Here we consider the large number of blue chains obtained by the Chain Lemma, Lemma \ref{lem:chain}, 
and use counting arguments to force the existence of a monochromatically blue copy of $P$.
Exemplary for this method are the proof of Theorem \ref{thm:SD} in Section \ref{sec:non-triv} and Theorem~1 in Winter \cite{QnK}.

A more technical proof method is the \textit{blocker approach}. It tries to strengthen the Chain Lemma in order to get a precise picture on how the blue chains in the coloring are located relative to each other, in fact they are grouped into structures called \textit{blockers}. This proof technique is described in another part of this series, see Axenovich and the author \cite{QnN}.
\\

A general \textbf{lower bound} on $R(P,Q_n)$ for every $P$ is obtained from a trivial layered coloring, see Theorem \ref{thm-MAIN}.
For bounding $R(P,Q_n)$ from below, most constructions slightly refine a layered coloring according to the fixed $P$, see e.g.\ the proofs of Theorems \ref{thm:CC} and \ref{lem_CA} or Gr\'osz, Methuku and Tompkins \cite{GMT}.
Most known proofs for the lower bound strengthen the trivial lower bound just by a constant. 
The only non-marginal improvement of the trivial lower bound is given by Axenovich and the author \cite{QnV} building on the structural insights obtained from the \textit{blocker approach}.

\subsection{Open problems}
In our series of papers we discussed the asymptotic behaviour in the off-diagonal poset Ramsey setting $R(P,Q_n)$ for $P$ fixed and $n$ large.
We close this study by collecting some open problems.
For trivial $P$, i.e.\ posets containing neither $\cV$ nor $\cLa$, Theorem \ref{thm-MAIN} bounds $R(P,Q_n)=n+\Theta(1)$ tight up to an additive constant.
We improved the bounds on this constant in Corollary \ref{cor:LB}, but a general exact bound remains to be determined.
For non-trivial $P$ the picture is more unclear. Theorem \ref{thm-MAIN} provides that 
$n+\tfrac{1}{15}\tfrac{n}{\log n} \le R(P,Q_n) \le c_P \cdot n.$
Axenovich and the author~\cite{QnV} conjectured that the true value of $R(P,Q_n)$ is closer to the lower bound.

\begin{conjecture}[\cite{QnV}]\label{conj:1}
For every fixed poset $P$, $R(P,Q_n)=n + o(n)$. %Furthermore there is a fixed poset $P$ with $R(P,Q_n)=n + \omega\left(\frac{n}{\log n}\right).$
\end{conjecture}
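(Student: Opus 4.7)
The plan is to reduce the conjecture to the case of Boolean lattice targets and then attack this via induction on dimension. Since every finite poset $P$ embeds as an induced subposet into $Q_{\dim_2(P)}$ and poset Ramsey numbers are monotone under induced subposets, it suffices to establish $R(Q_m,Q_n)=n+o(n)$ for every fixed $m$. The base cases $m=0,1,2$ are already known: $R(Q_0,Q_n)=n$, $R(Q_1,Q_n)=n+1$, and $R(Q_2,Q_n)=n+\Theta(n/\log n)$ by combining Gr\'osz--Methuku--Tompkins \cite{GMT} with Theorem \ref{thm-MAIN}. I would then proceed by induction on $m$, aiming to upgrade a bound $R(Q_{m-1},Q_n)\le n+f(n)$ with $f(n)=o(n)$ to an analogous bound for $Q_m$.

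The central strategy is to refine the chain approach of Theorem \ref{thm:SD}. Given a blue/red coloring of $\QQ(\cX\cup\cY)$ with $|\cX|=n$, $|\cY|=g(n)=o(n)$, and no red copy of $Q_n$, Lemma \ref{lem:chain} yields a blue $\cY$-chain for every linear ordering of $\cY$, giving $g(n)!$ chains in total. By pigeonhole, after fixing endpoints there remains a doubly-exponential family of blue chains corresponding to distinct permutations of $\cY$. Writing $Q_m$ as a configuration of $\binom{m}{\lfloor m/2\rfloor}$ maximal chains joining its minimum to its maximum, subject to prescribed pairwise intersection and union relations on intermediate vertices, I would try to extract a blue copy of $Q_m$ by selecting blue chains whose elementwise meets and joins realize the required lattice structure. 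The subdivided diamond is the case $m=2$, handled by Lemma \ref{lem:count_perm}, which counts permutations whose graphs avoid a prescribed near-diagonal pattern; the analogous counting for $Q_m$ requires controlling families of permutations whose pairwise ``misalignment profile'' simultaneously avoids many forbidden configurations.

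The main obstacle will be in the inductive step. Unlike the subdivided diamond, where only two chains with prescribed endpoints must be arranged, $Q_m$ demands $\binom{m}{\lfloor m/2\rfloor}$ chains in a highly constrained global configuration, and naive iteration of the chain approach incurs a multiplicative loss that destroys the $o(n)$ guarantee. One must instead exploit correlations between chains whose permutations differ in controlled ways, which likely requires a substantial generalization of Lemma \ref{lem:count_perm}. An alternative route is to combine the blocker approach from \cite{QnN} with Lemma \ref{cor_gluing}: decompose $Q_m$ along a middle antichain into two pieces each containing copies of $Q_{m-1}$, then glue via an interface layer, replacing single-vertex gluing by antichain gluing. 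This would require strengthening Lemma \ref{cor_gluing} so that it applies when the shared interface is an antichain rather than a single vertex, which in turn forces a quantitative refinement of the inductive hypothesis to something like $R(Q_{m-1},Q_n)\le n+O(n/\log n)$ uniformly in $m$.

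A remaining structural hurdle concerns posets $P$ that are neither series-parallel (equivalently, $\cN$-free, by Valdes's theorem) nor of bounded width, since both Theorem \ref{thm_union} and Lemma \ref{cor_gluing} become ineffective for such $P$. Once the Boolean lattice case is resolved, however, the general case follows by the embedding reduction in the first paragraph, so the bottleneck for the full conjecture is concentrated in the estimate for $R(Q_m,Q_n)$.
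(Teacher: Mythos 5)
This statement is a \emph{conjecture}: the paper offers no proof of it, and indeed it records (via Conjecture~\ref{conj:2} and the remark following it) that the statement is equivalent to an open strengthening of a conjecture of Lu and Thompson on $R(Q_m,Q_n)$. Your opening reduction is sound and is exactly the equivalence the paper notes: since every $P$ embeds as an induced subposet of $Q_{\dim_2(P)}$ and $R(P_1,Q_n)\le R(P_2,Q_n)$ when $P_1$ is an induced subposet of $P_2$, it suffices to prove $R(Q_m,Q_n)=n+o(n)$ for every fixed $m$. But from that point on your text is a research plan, not a proof: the inductive step you describe is precisely the open problem, and both of your proposed routes have gaps you yourself flag without closing.

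Concretely, the chain approach does not extend to $Q_m$ for $m\ge 3$ in the way you sketch. In the proof of Theorem~\ref{thm:SD} the blue copy of $\cS\cD_{t,t}$ consists \emph{only} of vertices lying on the blue $\cY$-chains (two interior segments plus the shared endpoints), and the only relations one must certify between the two chains are \emph{incomparabilities}, which follow from the $\cY$-coordinates alone. A copy of $Q_m$, $m\ge 3$, requires prescribed \emph{comparabilities} between interior vertices of different chains; the chains produced by Lemma~\ref{lem:chain} carry uncontrolled $\cX$-parts $X_0\subseteq\dots\subseteq X_k$, so elementwise meets and joins of chain vertices need not even be blue, and no analogue of Lemma~\ref{lem:count_perm} is supplied (or known) that forces such cross-chain containments. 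This is exactly why the method, together with the gluing lemma, currently only reaches width-$2$, $\cN$-free posets, as remarked after Lemma~\ref{cor_gluing}. Your alternative route founders on the same missing ingredient: Lemma~\ref{cor_gluing} identifies a single minimum with a single maximum, and the ``antichain gluing'' strengthening you would need (gluing two halves of $Q_m$ along the middle layer) is not established anywhere and would again have to control comparabilities across an entire antichain interface. So the proposal correctly locates the bottleneck but does not resolve it; as it stands, the conjecture remains open and your write-up should be presented as a programme, not a proof.
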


The lower bound $R(P,Q_n)=n+\Omega(\frac{n}{\log n})$ is known to be asymptotically tight in the two leading additive terms for some non-trivial $P$, i.e.\ $R(P,Q_n)=n+\Theta(\frac{n}{\log n})$. 
We say that such a poset $P$ is \textit{modest}. Note that $\cV$ and $\cLa$ are modest, and every non-trivial poset contains either $\cV$ or $\cLa$ as a subposet.
Belonging to the class of modest posets are e.g.\ subdivided diamonds, see Theorem \ref{thm:SD}, complete multipartite posets \cite{QnK} and the N-shaped poset $\cN$ \cite{QnN}. 
Notably it remains open whether there exists a non-trivial poset which is \textit{not} modest.
\begin{conjecture}\label{conj:1b}
There is a fixed poset $P$ with $R(P,Q_n)=n + \omega\left(\frac{n}{\log n}\right).$
\end{conjecture}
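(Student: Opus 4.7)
The plan is to identify a promising candidate poset $P$ and then construct a blue/red coloring of $\QQ([N])$ with $N = n + f(n)$ where $f(n) = \omega(n/\log n)$ that simultaneously avoids a blue copy of $P$ and a red copy of $Q_n$. The most natural candidate is $P = Q_m$ for some fixed $m \geq 3$. The reasoning is twofold: first, $R(Q_2, Q_n) = n + \Theta(n/\log n)$ is known to be tight (so $Q_2$ is modest), hence any counterexample must be structurally richer than $Q_2$; second, the only known upper bounds on $R(Q_m, Q_n)$ for $m \geq 3$ are the rough estimates of Lu--Thompson, leaving room for $R(Q_m, Q_n)$ to substantially exceed $n + O(n/\log n)$.

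My first step would be to attempt to generalize the lower bound construction of Axenovich and the author used to establish $R(\cV, Q_n) \geq n + \tfrac{1}{15}\tfrac{n}{\log n}$. That construction perturbs a layered coloring by introducing carefully-placed ``blockers'' whose size is calibrated so that neither a red $Q_n$ nor a blue $\cV$ can be embedded. To obtain a bound of $\omega(n/\log n)$, I would try iterating or compounding this construction: rather than a single perturbation layer, use $\log\log n$ (or more) nested perturbations at geometrically increasing scales, arranged so that any hypothetical red $Q_n$ embedding would be forced through one of the perturbation layers, while any blue copy of $Q_m$ is obstructed by the product structure connecting successive scales. This is reminiscent of the blob-and-block strategies surveyed in Section \ref{sec:survey}, but run in reverse, as a lower bound.

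Step two would be the verification that such a nested construction avoids both target copies. Avoiding a red $Q_n$ is typically arranged by a counting/dimension argument: if the blue perturbations collectively occupy $\omega(n/\log n)$ layers, no monochromatic red Boolean lattice of dimension $n$ can survive. The more delicate half is ruling out a blue copy of $Q_m$. Here one would need to exploit the fact that $Q_m$ (for $m \geq 3$) contains many incomparable chains of length $\geq 2$ emanating from a common vertex, and argue that the perturbations, suitably arranged (e.g., using antichain-like spread across levels), cannot host the full comparability pattern of $Q_m$. One might also try a probabilistic construction, taking random layered colorings with carefully tuned biases and applying a union bound over all potential $Q_m$-embeddings; a first moment estimate on the number of blue $Q_m$'s could be small enough if $P$ has sufficiently many internal comparabilities.

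The main obstacle is precisely the verification that blue $Q_m$ is avoided while $f(n)$ grows faster than $n/\log n$. All known lower bound arguments in this area produce constructions that can be analyzed one chain at a time via the Chain Lemma (Lemma \ref{lem:chain}), and their natural ceiling is $O(n/\log n)$ coming from the pigeonhole exponent in the Chain Lemma itself. Breaking this barrier requires a coloring whose blue structure is so ``spread out'' that the Chain Lemma cannot be saturated, yet tightly enough controlled to forbid $Q_m$ globally -- two demands that pull in opposite directions, which is why the conjecture has remained open, and why any solution would likely require a genuinely new technique beyond the blob, chain, and blocker approaches catalogued in Section \ref{sec:survey}.
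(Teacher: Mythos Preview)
The statement you are attempting to prove is a \emph{conjecture}, not a theorem: the paper explicitly lists it in Section~\ref{sec:survey} under ``Open problems'' and states that ``it remains open whether there exists a non-trivial poset which is \textit{not} modest.'' There is no proof in the paper for you to compare against, because none exists.

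Your proposal is not a proof either, and you essentially acknowledge this yourself. You outline a two-step program (nested perturbations of layered colorings, then verification that blue $Q_m$ is avoided), but step two is never carried out: you correctly identify that ``the main obstacle is precisely the verification that blue $Q_m$ is avoided while $f(n)$ grows faster than $n/\log n$,'' observe that all known techniques have a ``natural ceiling'' of $O(n/\log n)$, and conclude that a solution ``would likely require a genuinely new technique.'' This is an honest assessment of the state of the problem, but it is a research plan, not a proof. In particular, the heuristic that ``if the blue perturbations collectively occupy $\omega(n/\log n)$ layers, no monochromatic red Boolean lattice of dimension $n$ can survive'' is false as stated: a red copy of $Q_n$ need not be layered, so occupying many layers with blue vertices does not by itself obstruct a red $Q_n$ (this is precisely why the lower bound problem is hard). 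Until you can exhibit an actual coloring and verify both properties, no progress on Conjecture~\ref{conj:1b} has been made.
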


Known modest posets differ in various poset parameters, for example $\cS\cD_{t,t}$ has large height and $K_{1,t}$ has large width.
However, every known modest poset has order dimension~$2$.
The \textit{order dimension} of $P$ is the minimal number of linear orderings such that $P$ is the intersection of these linear orderings.
Natural candidates for proving Conjecture \ref{conj:1b} are either $Q_3$ or the standard example $S_3$, the $6$-element poset induced by layer $1$ and $2$ of $Q_3$. Both posets have order dimension $3$. 
\\

Determining $R(Q_m,Q_n)$ for $m\in\N$ is one of the most interesting open problems regarding poset Ramsey numbers.
While well-understood for $m\in\{1,2\}$, the asymptotic behaviour of $R(Q_m,Q_n)$ for fixed $m\ge 3$ is only bounded up to a constant linear factor, see Lu and Thompson \cite{LT}.
Conjectures \ref{conj:1} and \ref{conj:1b} are equivalent to the following.
\begin{conjecture}\label{conj:2}
For every fixed $m\in\N$, $R(Q_m,Q_n)=n + o(n)$. Furthermore there is a fixed integer $m\in\N$ with $R(Q_m,Q_n)=n + \omega\left(\frac{n}{\log n}\right).$
\end{conjecture}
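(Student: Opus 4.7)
The plan is to attack the two parts of Conjecture \ref{conj:2} separately: the universal upper bound $R(Q_m,Q_n)\le n+o(n)$ and the existence claim $R(Q_m,Q_n)\ge n+\omega(n/\log n)$ for some $m$. Both parts resist the tools developed in this series, so I would proceed speculatively, aiming first to refine the chain approach to higher $m$, and second to push the blocker-based lower bounds beyond the $n/\log n$ barrier.

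For the upper bound I would attempt induction on $m$. The base case $R(Q_1,Q_n)=n+1$ is trivial, and the case $m=2$ gives $R(Q_2,Q_n)\le n+(1+o(1))\tfrac{n}{\log n}$ by Gr\'osz--Methuku--Tompkins. Assume inductively that $R(Q_{m-1},Q_n)\le n+g_{m-1}(n)$ with $g_{m-1}(n)=o(n)$, and color $\QQ([N])$ with $N=n+g_m(n)$ for a target $g_m$ to be determined. Rather than producing $Q_m$ in one shot, I would find two parallel blue copies of $Q_{m-1}$, one nested above the other, whose union forms $Q_m$. Concretely, apply the Chain Lemma to harvest a long blue chain $C$; pick two vertices $U\subsetneq V$ on $C$ separated by a block of size roughly $\log N$, so that the sublattice $\QQ(V\setminus U)$ has dimension large enough to apply the inductive bound relative to the residual ground set $[N]\setminus V \cup U$. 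This yields either a red $Q_n$ (done) or a blue $Q_{m-1}$ sitting in a restricted interval. Then apply the inductive bound once more to the up-set above this $Q_{m-1}$ (minus a small chain used to ``lift'' it) to obtain a second parallel $Q_{m-1}$ glued on top. The bookkeeping should yield $g_m(n) \le 2g_{m-1}(n) + O(\log n) = o(n)$, closing the induction.

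For the existence claim, I would look at posets of order dimension at least~$3$, following the author's suggestion of $Q_3$ or the standard example $S_3$. The idea is to exploit that every known modest poset fits inside a product of two chains, whereas $S_3$ does not: this should translate into a ``two-dimensional'' obstruction that limits how tightly a coloring avoiding $S_3$ can be layered. To build the construction, I would try a product coloring: take a coloring $c_1$ of $Q_a$ and a coloring $c_2$ of $Q_b$ avoiding certain small structures, and color $Q_{a+b}$ by $c(X,Y)=\mathrm{blue}$ iff both coordinates are blue. Tune $a,b$ so that avoiding a red $Q_n$ only forces $a+b$ slightly above $n$, while avoiding a blue $S_3$ requires both $c_1$ and $c_2$ to individually avoid structures whose Ramsey threshold gives an additive gap of super-$n/\log n$ order when summed.

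The hard part, by far, will be the existence claim. All previously analysed constructions — layered colorings, Walzer-style parallel products, and the blocker-based constructions of \cite{QnV} — plateau at a $\Theta(n/\log n)$ improvement, and the tightness of $R(Q_2,Q_n)=n+\Theta(n/\log n)$ strongly suggests that any purely product- or layer-based construction is ceilinged at this magnitude. A new source of obstruction, genuinely using order-dimension $\ge 3$ or a non-product structure, appears necessary; I would regard this as the central open problem. The upper bound, by contrast, seems within reach of a careful induction once the case $m=3$ is handled, and I would expect it to be proved first, leaving the existence claim as the real frontier.
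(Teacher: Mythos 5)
This statement is Conjecture~\ref{conj:2}, which the paper explicitly leaves \emph{open}; there is no proof in the paper against which to compare your proposal. The only content the paper supplies is the remark that Conjecture~\ref{conj:2} is equivalent to Conjectures~\ref{conj:1} and~\ref{conj:1b}. That equivalence is immediate: every finite poset $P$ is an induced subposet of $Q_m$ with $m=\dim_2(P)$, so $R(P,Q_n)\le R(Q_m,Q_n)$, and conversely $Q_m$ is itself a poset; your proposal does not engage with this (admittedly short) reduction at all, and instead tries to attack the conjecture directly.

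Within your sketch there is a concrete gap in the inductive upper bound. Writing $Q_m\cong\QQ([m])$ as the union of the down-set $\{X : m\notin X\}$ and the up-set $\{Y : m\in Y\}$, the two $Q_{m-1}$-halves are \emph{not} simply ``one nested above the other'': a vertex $X$ in the lower half is comparable to $Y=X'\cup\{m\}$ in the upper half exactly when $X\subseteq X'$. An induced copy of $Q_m$ therefore requires the two blue $Q_{m-1}$-copies to be in exact positional correspondence (a ``lifted'' embedding), not merely one sitting above the other in a blue chain and a residual Boolean lattice. Enforcing this alignment is precisely what the blob approach of Axenovich--Walzer does, and it costs a multiplicative factor of about $m$ in the dimension rather than the additive $o(n)$ you need; your recursion $g_m(n)\le 2g_{m-1}(n)+O(\log n)$ is asserted without any argument for why alignment can be had so cheaply, and I do not see how to supply one. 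For the lower-bound half you yourself flag the essential obstacle --- no known construction (layered, Walzer-style parallel product, or blocker-based) exceeds $n+O(n/\log n)$ --- and the paper offers nothing beyond naming $Q_3$ and the standard example $S_3$ as candidates. Nothing in the proposal, or in the paper, closes either half; the statement remains a conjecture.
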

\noindent Note that Conjecture \ref{conj:2} is a strengthening of an open conjecture raised by Lu and Thompson~\cite{LT}.
\\

%%%%%%%%%%%%%%%%%%%%

\noindent \textbf{Acknowledgments:}~\quad   Research was partially supported by DFG grant FKZ AX 93/2-1.
The author would like to thank Maria Axenovich for many helpful comments and discussions, and her contribution to other parts of this series.
The author thanks Torsten Ueckerdt for his comments leading to a cleaner argument for Lemma \ref{lem:count_perm}, Felix Clemen for valuable comments on the manuscript as well as the two anonymous referees of a previous version of this manuscript for their careful reading and their constructive feedback.

%%%%%%%%%%%%%%%%%%%%%%%%

\end{document}